\def\l@section{\@tocline{1}{0pt}{1pc}{}{}}
\def\l@subsection{\@tocline{2}{0pt}{1pc}{4.6em}{}}
\def\l@subsubsection{\@tocline{3}{0pt}{1pc}{7.6em}{}}
\renewcommand{\tocsection}[3]{%
  \indentlabel{\@ifnotempty{#2}{\makebox[2.3em][l]{%
    \ignorespaces#1 #2.\hfill}}}#3}
\renewcommand{\tocsubsection}[3]{%
  \indentlabel{\@ifnotempty{#2}{\hspace*{2.3em}\makebox[2.3em][l]{%
    \ignorespaces#1 #2.\hfill}}}#3}
\renewcommand{\tocsubsubsection}[3]{%
  \indentlabel{\@ifnotempty{#2}{\hspace*{4.6em}\makebox[3em][l]{%
    \ignorespaces#1 #2.\hfill}}}#3}
\numberwithin{equation}{section}
\newcounter{hours}\newcounter{minutes}
\theoremstyle{definition}
\newtheorem{theorem}{Theorem}[section]
\newtheorem{lemma}[theorem]{Lemma}
\newtheorem{proposition}[theorem]{Proposition}
\theoremstyle{remark}                  
\newtheorem{remark}[theorem]{Remark}
\theoremstyle{definition}
\newtheorem{definition}[theorem]{Definition}
\newtheorem{problem}[theorem]{Problem}
\def\diam{\textnormal{diam}}
\def\det{\textnormal{det}}
\def\Id{\textnormal{Id}}
\newcommand{\norm}[1]{\lvert#1\rvert}
\newcommand{\inner}[2]{\langle #1,#2\rangle}
\DeclareMathOperator{\dom}{dom}
\DeclareMathOperator{\spn}{span}
\def\zbar{\bar{z}}
\def\qbar{\bar{q}}
\def\Nbar{\bar{N}}
\newcommand{\cExp}[2]{exp^c_{#1}({#2})}
\newcommand{\cstarExp}[2]{exp^{c^*}_{#1}({#2})}
\newcommand\normal[2]{N_{#1}\left(#2\right)}
\newcommand{\paren}[1]{\left(#1\right)}
\def\ybar{\bar{y}}
\def\vbar{\bar{v}}
\newcommand{\Hvol}[2][]{\mathcal{H}^{#1}\paren{#2}}
\newcommand{\subdiff}[2]{\partial {#1}{(#2)}}
\def\S{\mathbb{S}}
\DeclareMathOperator{\proj}{proj}
\DeclareMathOperator{\graph}{graph}
\newcommand{\dVol}[1][]{d \textnormal{Vol}_{#1}}
\newcommand{\R}{\mathbb{R}}
\DeclareMathOperator{\epi}{epi}
\def\Ybar{\bar{Y}}
\begin{document}
\title{Optimal transport and the Gauss curvature equation}

\author{Nestor Guillen}
\address[Nestor Guillen]{Department of Mathematics, Texas State University}
\email{nestor@txstate.edu} 

\author{Jun Kitagawa}
\address[Jun Kitagawa]{Department of Mathematics, Michigan State University}
\thanks{}
\email{kitagawa@math.msu.edu}
\thanks{JK's research was supported in part by National Science Foundation grant DMS-1700094. NG's research was supported in part by National Science Foundation Grant DMS-1700307.}
\begin{abstract}
In this short note, we consider the problem of prescribing the Gauss curvature and image of the Gauss map for the graph of a function over a domain in Euclidean space. The prescription of the image of the Gauss map turns this into a second boundary value problem. Our main observation is that this problem can be posed as an optimal transport problem where the target is a subset of the lower hemisphere of $\mathbb{S}^n$. As a result we obtain existence and regularity of solutions under mild assumptions on the curvature, as well as a quantitative version of a gradient blowup result due to Urbas, which turns out to fall within the optimal transport framework.
\end{abstract}

\date{\today}

\date{}
\maketitle
\markboth{N. Guillen and J. Kitagawa}{Optimal transport and the Gauss curvature equation}


\begin{center}
  \emph{To John Urbas with great admiration, on the occasion of his 60th birthday.}
\end{center}

\section{Introduction}

The aim of this note is not to present substantially new results but to show how the theory of optimal transport, as developed since Yann Brenier's fundamental work \cite{Bre91}, provides a simple framework to construct solutions to the equation of prescribed Gauss curvature where we consider the second boundary value problem, as opposed to the more commonly studied Dirichlet problem. Furthermore, we provide a quantitative proof and interpretation in terms of optimal transport of a result due to Urbas in \cite{Urbas84} concerning the case where the total mass of the function $K(x)$ is critical with respect to admissible curvatures (see \eqref{eqn:critical mass condition}). 

We recall that when $u:\Omega \to \mathbb{R}$ is a smooth function in a smooth domain $\Omega \subset \mathbb{R}^n$ then the Gauss curvature of its graph ${\Sigma_u} = \{ (x,u(x)) \in \mathbb{R}^{n+1} \mid x \in \Omega \}$ at the point $(x,u(x))$ is given by
\begin{align}\label{eqn:Gauss curvature in terms of u}
  K(x) = \frac{\det(D^2u(x))}{(1+|\nabla u(x)|^2)^{\frac{n+2}{2}}}.
\end{align}
The question we are concerned with is the construction of a surface with given $K(x)$ and a given image for its Gauss map, thus we pose the following problem.  
\begin{problem}\label{problem:main}
  Given $\Omega \subset \mathbb{R}^n$ convex and bounded, a non-negative function $K:\Omega\to\mathbb{R}$, and a domain $\Omega^* \subset \mathbb{S}^n$, find a convex function $u:\Omega \to \mathbb{R}$ whose graph ${\Sigma_u}$ is such that the Gauss curvature at $(x,u(x))$ is equal to $K(x)$ and such that the image of ${\Sigma_u}$ under the Gauss map (outer unit normal map) is equal to $\Omega^*$.

\end{problem}
The prescribed Gauss curvature problem is traditionally formulated as a boundary value problem for the PDE \eqref{eqn:Gauss curvature in terms of u} with prescribed values for $u$ along $\partial \Omega$ (Dirichlet problem), or to a lesser extent the normal derivative (Neumann problem). Prescribing the image of the Gauss map as in Problem \ref{problem:main} is equivalent to what is known in the elliptic PDE literature as a second boundary value problem.

The main observation in this paper is that Problem \ref{problem:main} is an optimal transport problem between $\Omega$ and a hemisphere of $\mathbb{S}^n$ (Theorem \ref{theorem:existence and uniqueness}). In this transport problem the source measure is given by the Gauss curvature $K(x)$ restricted to $\Omega$ and the target measure is the $n$-form $-y_{n+1}\dVol[\mathbb{S}^n]$ restricted to the corresponding target set in $\mathbb{S}^n$.

This identification of Problem \ref{problem:main} as an optimal transport problem immediately puts at our disposal an array of general results and methods from optimal transport. Accordingly, we have existence and uniqueness of weak solutions (also Theorem \ref{theorem:existence and uniqueness}), as well as regularity of such weak solutions under further assumptions on $K$ (Theorem \ref{theorem:subcritical case regularity}). 

The optimal transport framework gives another point of view on a result of Urbas (\cite{Urbas84}) concerning prescribed Gauss curvature ``without boundary conditions''. In particular, we demonstrate the use of optimal transport techniques to obtain a quantitative version of \cite[Lemma 6.3]{Urbas84}. This result says that if the mass of $K$ is the maximum possible admissible value, and $K$ satisfies a growth condition near $\partial \Omega$, then the slope of $u(x)$ must go to infinity as $x$ approaches the boundary. In this paper, we provide an explicit rate for the blow up.

\subsection*{Outline of the paper} The rest of this paper is organized as follows: in Section \ref{section:results} we go over basic terminology, state the main results, and discuss some relevant literature. Then in Section \ref{section: OT} we review needed definitions and results from optimal transport. In Section \ref{section:Gauss Curvature as OT} we show how to formulate Problem \ref{problem:main} as an optimal transport problem and from here prove existence of solutions and some regularity results. Lastly, in Section \ref{section:Urbas revisited} we revisit Urbas' result.

\section{Main results}\label{section:results}

Let $\Omega\subset \R^n$ be bounded, let $K:\Omega \to \mathbb{R}$ be a Borel, non-negative function such that
\begin{align}\label{eqn:K mass condition}
  \int_{\Omega} K(x)\;dx \leq \omega_n,
\end{align}
where $\omega_n$ is the Lebesgue measure of the unit ball in $\R^n$, and suppose $\overline{\Omega}=\overline{\{x\in \Omega\mid K(x)>0\}}$. Also if $u$ is a convex function defined on $\Omega$ and $\dom(Du)$ the set of differentiability points of $u$ in $\Omega$, for each $x\in \dom(Du)$ we define $\mathcal{G}u(x)$ to be the unique vector in $\S^n_-$ which is the outward pointing normal vector to the graph of $u$ at $(x, u(x))$. Of course if $u\in C^1(\Omega)$, this is the classical Gauss map parametrized by points in $\Omega$.

We refer the reader to Section \ref{section: OT} below for definitions specific to optimal transport theory. 
\begin{theorem}\label{theorem:existence and uniqueness}
  Let $K$ be as above and let $\Omega^* \subseteq \mathbb{S}^n_-$ be such that 
  \begin{align*}
    \int_{\Omega}K(x)\;dx = \int_{\Omega^*}(-y_{n+1})\dVol[\mathbb{S}^n].
  \end{align*}
  
  Then any Brenier solution $u$ of the optimal transport problem with cost function $c: \Omega\times \S^n_-\to \R$, $c(x, \ybar):=\inner{x}{\frac{y}{y_{n+1}}}$, between the measures $K dx$ on $\Omega$ and $-y_{n+1}\mathds{1}_{\Omega^*}\dVol[\mathbb{S}^n]$ solves the equation \eqref{eqn:Gauss curvature in terms of u} pointwise a.e. in $\Omega$, and
  \begin{align}\label{eqn: gauss map image}
      \overline{\mathcal{G}u(\Omega)}=\overline{\Omega^*}.
  \end{align}
  
If $u\in C^2(\Omega)$ then $u$ is a classical solution of \eqref{problem:main}, and the image of the Gauss map of ${\Sigma_u}$ is equal to $\Omega^*$.
\end{theorem}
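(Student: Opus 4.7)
My plan is to identify the Brenier optimal map for this cost with the parametrized Gauss map $\mathcal{G}u$ and then translate the transport mass-balance into the pointwise PDE \eqref{eqn:Gauss curvature in terms of u} via the classical area formula. First, for the cost $c(x,\bar y)=\inner{x}{y/y_{n+1}}$ one has $\nabla_x c(x,\bar y)=y/y_{n+1}$, so at any $x\in\dom(Du)$ the $c$-subdifferential relation forces $y/y_{n+1}=\nabla u(x)$; imposing $\bar y\in\S^n_-$ uniquely pins this down to $\bar y=(\nabla u(x),-1)/\sqrt{1+|\nabla u(x)|^2}=\mathcal{G}u(x)$. Hence the Brenier map coincides with $\mathcal{G}u$ almost everywhere.

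Second, I would unpack the pushforward condition $\mathcal{G}u_\#(K\,dx)=-y_{n+1}\mathds{1}_{\Omega^*}\dVol[\S^n]$. By Alexandrov's theorem, $u$ has a second-order expansion at a.e.\ $x\in\Omega$, and at such points the standard Gauss map Jacobian formula gives
\begin{align*}
\mathcal{G}u^{*}\dVol[\S^n]=\frac{\det D^2 u(x)}{(1+|\nabla u(x)|^2)^{(n+1)/2}}\,dx.
\end{align*}
Combined with the identity $-y_{n+1}\circ\mathcal{G}u(x)=(1+|\nabla u(x)|^2)^{-1/2}$, the change of variables formula applied to the pushforward yields $K(x)=\det D^2u(x)/(1+|\nabla u(x)|^2)^{(n+2)/2}$ for a.e.\ $x\in\Omega$, which is exactly \eqref{eqn:Gauss curvature in terms of u}. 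If $u\in C^2(\Omega)$ then $\mathcal{G}u$ is a continuous homeomorphism onto its image and this holds classically everywhere.

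For \eqref{eqn: gauss map image}, the inclusion $\mathcal{G}u(\dom(Du))\subseteq\overline{\Omega^*}$ follows because the pushforward measure is supported in $\overline{\Omega^*}$, so $\mathcal{G}u(x)\in\overline{\Omega^*}$ for $K\,dx$-a.e.\ $x$; the hypothesis $\overline{\Omega}=\overline{\{K>0\}}$ then lets us pass to closures. Conversely, the target density $-y_{n+1}$ is strictly positive on $\S^n_-$, so the target measure has topological support equal to $\overline{\Omega^*}$, and any full-measure image of a transport map must be dense in this support, giving $\overline{\Omega^*}\subseteq\overline{\mathcal{G}u(\Omega)}$. In the $C^2$ case continuity and openness of $\mathcal{G}u$ upgrade this to equality of the (open) sets without passing to closures.

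The main obstacle I anticipate is the careful bookkeeping between pointwise a.e.\ identities and the closure statement in \eqref{eqn: gauss map image}: $\dom(Du)$ is merely of full Lebesgue measure, and no regularity is assumed on $\partial\Omega$ or $\partial\Omega^*$. The assumption $\overline{\Omega}=\overline{\{K>0\}}$ is precisely what prevents the existence of sizable subregions of $\Omega$ that are ``invisible'' to the transport, so that the image of the map genuinely fills a dense subset of $\overline{\Omega^*}$ and the closure identity is not lost to measure-zero pathologies.
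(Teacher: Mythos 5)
Your overall route is the same as the paper's: identify the Brenier map with the parametrized Gauss map, turn the pushforward condition into the a.e.\ Monge--Amp\`ere identity, and read the image statement off support considerations. Two steps, however, are asserted rather than proved, and they are exactly where the paper brings in specific tools. First, ``the change of variables formula applied to the pushforward'' is not available off the shelf: $Du$ is neither Lipschitz nor $C^1$, so the classical area formula does not apply, and the Monge--Amp\`ere measure of a convex function may carry a singular part that Alexandrov's theorem alone does not rule out. The fact that $(T_u)_\#(K\,dx)$ being absolutely continuous with the density you compute forces $K(x)=\det D^2u(x)\,(1+|\nabla u(x)|^2)^{-\frac{n+2}{2}}$ at a.e.\ Alexandrov point is a theorem of McCann; the paper gets it by transferring the target measure to $\R^n$ (Remark \ref{rem: euclidean density}), where the pushforward becomes $(1+\norm{p}^2)^{-\frac{n+2}{2}}dp$, and citing \cite[Proposition 4.2 and Theorem 4.4]{McCann97}. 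Your spherical Jacobian computation and the identity $-y_{n+1}\circ\mathcal{G}u=(1+|\nabla u|^2)^{-1/2}$ are correct and give the right exponent, but this a.e.\ Jacobian theorem must be invoked (or reproved), not the naive change of variables. (Minor sign slip: the first-order condition is $\nabla u(x)=-D_xc(x,\ybar)=-y/y_{n+1}$, not $y/y_{n+1}$; your displayed formula for $\ybar$ is nevertheless the correct one.)

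Second, the forward inclusion in \eqref{eqn: gauss map image} has a genuine gap. The pushforward condition only yields $\mathcal{G}u(x)\in\overline{\Omega^*}$ for $K\,dx$-a.e.\ $x$, whereas \eqref{eqn: gauss map image} concerns the image of \emph{every} point of $\dom(Du)$, including points where $K=0$ and the exceptional null set. Density of the good points (which is what $\overline{\Omega}=\overline{\{K>0\}}$ buys you) says nothing by itself about $\mathcal{G}u$ at the remaining points; the missing mechanism is that the gradient of a convex function is continuous on its domain of differentiability (\cite[Theorem 25.5]{Roc70}), so that $\mathcal{G}u(x)=\lim_k\mathcal{G}u(x_k)\in\overline{\Omega^*}$ along good points $x_k\to x$ in $\dom(Du)$. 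This is precisely how the paper argues, following \cite[Theorem 10.38]{Vil09}. Your reverse inclusion (the support of the target, which equals $\overline{\Omega^*}$, must lie in the closure of the image) is fine. Finally, in the $C^2$ case, be careful with ``openness of $\mathcal{G}u$'': $K$ is only assumed nonnegative, so $D^2u$ may degenerate and $\mathcal{G}u$ need not be an open map; that part of the claim needs a different justification than the one you sketch.
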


Next, assume further that $K(x)$ satisfies the following bounds for two constants $\lambda_1$ and $\lambda_2$.
\begin{align}\label{eqn:K upper and lower bounds}
  0<\lambda_1 \leq K(x) \leq \lambda_2, \quad\forall\;x\in\Omega.  
\end{align}

\begin{theorem}\label{theorem:subcritical case regularity}
In the previous theorem suppose furthermore that $K(x)$ satisfies \eqref{eqn:K upper and lower bounds} and $\Omega^*$ is a geodesically convex subset of $\S^n_-$.

If $\Omega^*$ is compactly contained in $\S^n_-$, then $u$ is strictly convex and $u \in C^{1,\alpha}_{\textnormal{loc}}(\Omega)$ for some $\alpha\in (0, 1]$.  If $\overline{\Omega^*}\cap \partial \S^n_-\neq \emptyset$ and $\Omega$ is uniformly convex, then for any $\Omega'\Subset\Omega$, there exists an $\alpha\in (0, 1]$ such that $u\in C^{1, \alpha}_{\textnormal{loc}}(\Omega')$.
\end{theorem}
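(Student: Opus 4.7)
The approach I would take is to reduce the problem to Caffarelli's classical regularity theory via a single explicit change of variables in the target. Consider the gnomonic projection $\phi:\S^n_-\to\R^n$ defined by $\phi(\bar y)=y/y_{n+1}$. Three observations are central. First, the cost function $c(x,\bar y)=\inner{x}{y/y_{n+1}}$ becomes the bilinear cost $\inner{x}{p}$ in the coordinate $p=\phi(\bar y)$; in particular, the Ma--Trudinger--Wang tensor of $c$ vanishes identically, so we are in the most favorable setting of the regularity theory. Second, a direct computation shows that
\begin{equation*}
  \phi_\#\paren{(-y_{n+1})\mathds{1}_{\Omega^*}\dVol[\S^n]}=(1+|p|^2)^{-(n+2)/2}\mathds{1}_{\phi(\Omega^*)}\,dp.
\end{equation*}
Third, since gnomonic projection sends great circles to straight lines, the geodesic convexity of $\Omega^*\subset\S^n_-$ translates to Euclidean convexity of $\phi(\Omega^*)\subset\R^n$. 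In these $p$-coordinates a Brenier solution $u$ from Theorem \ref{theorem:existence and uniqueness} is a convex function whose gradient (up to a sign) transports the source to the target, and the Gauss curvature equation \eqref{eqn:Gauss curvature in terms of u} reads as the Monge--Amp\`ere equation
\begin{equation*}
  \det D^2 u(x)=K(x)\paren{1+|\nabla u(x)|^2}^{(n+2)/2},
\end{equation*}
with second boundary condition $-\nabla u(\Omega)\subseteq\overline{\phi(\Omega^*)}$.

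For the first case $\Omega^*\Subset\S^n_-$, the set $\phi(\Omega^*)$ is a bounded convex subset of $\R^n$, so $|\nabla u|$ is bounded on $\Omega$; combined with $\lambda_1\leq K\leq\lambda_2$, this means $\det D^2 u$ is pinched between positive constants on $\Omega$. Since $\Omega$ and $\phi(\Omega^*)$ are both bounded and convex, this is precisely the setting of Caffarelli's classical regularity theorem for Brenier potentials, which yields both the strict convexity of $u$ and interior $C^{1,\alpha}$ regularity.

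The second case is more delicate because $\phi(\Omega^*)$ is unbounded and the target density degenerates at infinity (the equator $\partial\S^n_-$). My plan is to exhaust $\Omega^*$ from within by geodesically convex open sets $\Omega^*_k\Subset\S^n_-$, rescaling $K$ by constants $\alpha_k\to 1$ so that the mass balance against $(-y_{n+1})\mathds{1}_{\Omega^*_k}\dVol[\S^n]$ is restored, and let $u_k$ be the resulting Brenier solutions, each of which falls under Case 1. The uniform convexity of $\Omega$ is meant to furnish a bound on $|\nabla u_k|$ on interior subdomains $\Omega'\Subset\Omega$ that is uniform in $k$; such a bound should be extractable from a barrier argument on the $c$-convex potentials, in the same spirit as the quantitative Urbas-type blow-up rate that the authors revisit in Section \ref{section:Urbas revisited}. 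Granted this estimate, the right-hand side of the Monge--Amp\`ere equation for $u_k$ is uniformly pinched on $\Omega'$, the Case 1 regularity estimates apply uniformly, and the interior $C^{1,\alpha}$ bound passes to the limit. The main obstacle I anticipate is precisely this uniform interior gradient bound: converting the geometric assumption of uniform convexity of $\Omega$ into an effective quantitative interior estimate, in a manner stable under the exhaustion, is where the optimal transport viewpoint ought to pay off.
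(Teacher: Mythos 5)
Your first case is correct, though it takes a slightly different route from the paper: you pass through the gnomonic projection to the bilinear cost and invoke Caffarelli's Euclidean theory (bounded convex target, pinched densities, hence strict convexity and local $C^{1,\alpha}$), whereas the paper stays in the hemisphere picture, checks $c^*$-convexity of $\Omega^*$ and boundedness of $-y_{n+1}$ away from $0$ and $\infty$, and cites the MTW-based theory of \cite{GK14}. Your reduction is exactly the paper's Remark \ref{rem: euclidean density}, so in the compact case the two arguments are interchangeable, and the Caffarelli route is arguably the more elementary one.

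The second case, however, has a genuine gap, which you partly acknowledge but also misdiagnose. First, the obstacle is not really an interior gradient bound: the Brenier potential $u$ is convex and finite a.e.\ on the open convex set $\Omega$, hence finite and locally Lipschitz there, so $\sup_{\Omega''}|\nabla u|<\infty$ on any $\Omega''\Subset\Omega$ comes for free. The genuinely hard step is \emph{strict convexity} of $u$ (excluding segments in the contact sets), without which Caffarelli's interior $C^{1,\alpha}$ machinery cannot be localized at all. In the compact case strict convexity comes from Caffarelli's argument using boundedness of the target and global pinching of the densities; when $\overline{\Omega^*}$ meets $\partial\S^n_-$ the projected target is unbounded and its density $(1+|p|^2)^{-\frac{n+2}{2}}$ decays to zero, so that argument does not apply as is. This is precisely where the hypotheses that $\Omega$ is uniformly convex (in particular $|\partial\Omega|=0$) enter, and it is the content of \cite[Theorem 2.1]{CorderoFigalli19} under condition (2-d), which the paper cites rather than re-proves. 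Second, even granting your plan, the exhaustion scheme is not closed: you would need geodesically convex sets $\Omega^*_k\nearrow\Omega^*$ (fine: intersect with caps $\{y_{n+1}\le-\varepsilon_k\}$), stability of the potentials $u_k\to u$ up to additive constants, and, crucially, Case 1 estimates that are \emph{uniform in $k$}. Caffarelli's modulus of strict convexity and the resulting $C^{1,\alpha}$ norms degrade as the target grows and the density ratio degenerates along the exhaustion, so uniformity is exactly the point at issue and would require an argument of the same depth as the non-compact theory you are trying to avoid. As written, the proposal proves the first assertion of the theorem but only sketches a program for the second.
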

\begin{remark}
 We remark here, both the conditions and conclusions we have stated in Theorems \ref{theorem:existence and uniqueness} and \ref{theorem:subcritical case regularity} are by no means the sharpest or the strongest conclusions that are currently known. For example, it is possible to obtain global or higher regularity results under various assumptions, which we do not detail here.
\end{remark}

Finally, we show a quantitative version of the result \cite[Lemma 6.3]{Urbas84} by Urbas, using optimal transport techniques.
\begin{theorem}\label{theorem:critical case boundary behavior}
  Let $u$ be a Brenier solution as in the previous theorem. Suppose furthermore that equality is attained in \eqref{eqn:K mass condition}, there exist $C_0$, $r_0>0$ and $\delta\in (0, 1)$ such that 
  \begin{align}\label{eqn: K decay condition}
      K(x)\leq \frac{C_0}{d(x, \partial \Omega)^\delta},\ \forall x\in \mathcal{N}_{r_0}(\partial \Omega),
  \end{align} 
  and there exists $R_0>0$ such that $\Omega$ satisfies an enclosing ball condition at every point on $\partial \Omega$ for some ball of radius less than or equal to $R_0$.
  
  If $x_0\in \Omega$ and $p_0\in \subdiff{u}{x_0}$, are such that
  \begin{align}
      d(x_0, \partial \Omega)&\leq \min(\frac{\rho^2}{16(1+4R_0)},  \frac{r_0}{2}, \frac{1}{4}),\label{eqn: d_0 condition}
  \end{align}
 (where $\rho$ is from Remark \ref{rem: Omega boundary}) then 
  \begin{align}\label{eqn: p_0 estimate}
    |p_0| \geq \Lambda d(x,\partial \Omega)^{-\frac{1-\delta}{n+2} }-2,\; \Lambda = \Lambda(R_0,L,n,\delta).
  \end{align}
  (where $L>0$ is also from Remark \ref{rem: Omega boundary}).
\end{theorem}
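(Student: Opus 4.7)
The plan is to exploit the mass balance of the optimal transport. By Theorem~\ref{theorem:existence and uniqueness} and the critical mass condition, writing $p = y/y_{n+1}$ the map $\nabla u$ pushes $K\,dx$ forward to $(1+|p|^2)^{-(n+2)/2}\,dp$ on all of $\R^n$. The strategy is to probe a small ball of slopes far from $p_0$ in the outward normal direction; $c$-cyclical monotonicity, which here reduces to ordinary monotonicity of the subdifferential of the convex function $u$, forces the preimage of this probe into a thin cap near $\partial\Omega$, whose $K$-measure is then controlled via \eqref{eqn: K decay condition} and the enclosing ball condition.

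To localize, let $\bar x_0 \in \partial\Omega$ be nearest to $x_0$ and $\nu$ the inward unit normal there, so that $d_0 := d(x_0,\partial\Omega) = \langle x_0 - \bar x_0,\nu\rangle$, and set $M = |p_0|$. Consider the probe ball $\mathcal{B} := B_1(p_0 - (2M+2)\nu) \subset \R^n$. Any $p \in \mathcal{B}$ may be written as $p = p_0 - (2M+2)\nu + \eta$ with $|\eta| \leq 1$, and for any $x \in \partial u^{-1}(p)$ the monotonicity inequality $\langle p - p_0, x - x_0\rangle \geq 0$ expands and rearranges to $(2M+2)\langle \nu, x-x_0\rangle \leq \langle \eta, x - x_0\rangle \leq \diam(\Omega)$. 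Hence $\partial u^{-1}(\mathcal{B})$ is contained in the cap $C_h := \{x \in \Omega : \langle x - \bar x_0, \nu\rangle \leq h\}$ with $h = d_0 + \diam(\Omega)/(2M+2)$.

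For the two masses: since $|p| \leq 3M+3$ on $\mathcal{B}$, the target satisfies $\int_\mathcal{B}(1+|p|^2)^{-(n+2)/2}\,dp \geq c_n(M+2)^{-(n+2)}$ for a dimensional constant $c_n > 0$. On the source side, the enclosing ball yields $\langle x - \bar x_0, \nu\rangle \geq |x - \bar x_0|^2/(2R_0)$ and $d(x,\partial\Omega) \leq 2\langle x - \bar x_0, \nu\rangle$ for $x \in \Omega$, placing $C_h$ in $\{x_n \in [0,h],\ |x'|^2 \leq 2R_0 x_n\}$ in local coordinates at $\bar x_0$. Combining this with the local graphical representation of $\partial\Omega$ (using $\rho$ and $L$ from Remark~\ref{rem: Omega boundary}) and a layer-cake integration of $d(\cdot,\partial\Omega)^{-\delta}$ yields $\int_{C_h}K\,dx \leq C_1 R_0^{(n-1)/2} h^{(n+1)/2 - \delta}$ for a constant $C_1 = C_1(n,C_0,L,\delta)$.

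Mass balance then gives $c_n(M+2)^{-(n+2)} \leq C_1 R_0^{(n-1)/2} h^{(n+1)/2 - \delta}$. In the regime $d_0 \geq \diam(\Omega)/(2M+2)$ one has $h \leq 2 d_0$, and since $n \geq 1$ together with $d_0 \leq 1$ (ensured by \eqref{eqn: d_0 condition}) gives $h^{(n+1)/2 - \delta} \leq (2d_0)^{1-\delta}$, the inequality rearranges to $|p_0| \geq \Lambda d_0^{-(1-\delta)/(n+2)} - 2$ for an appropriate $\Lambda$. In the opposite regime $d_0 < \diam(\Omega)/(2M+2)$, the estimate $|p_0|+2 > \diam(\Omega)/(2d_0)$ is already stronger than the claimed bound once $d_0$ is small enough, by \eqref{eqn: d_0 condition}. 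The main obstacle is extracting the scaling $h^{(n+1)/2 - \delta}$ in the $K$-measure bound on $C_h$: this requires combining the quadratic tangential width from the enclosing ball with the $(n-1)$-dimensional measure bound on $\partial\Omega \cap C_h$ (controlled by $\rho$ and $L$) in the layer-cake integration of $d^{-\delta}$.
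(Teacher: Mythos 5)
Your overall strategy (probe a set of slopes ``pointing toward the boundary'', use monotonicity of $\subdiff{u}{\cdot}$ to confine its preimage to a thin region near $\partial\Omega$, then compare the target measure of the probe with the $K$-mass of that region via \eqref{eqn: K decay condition}, the enclosing ball, and a slice/layer-cake estimate) is the same as the paper's. But your execution has a genuine gap, and it sits exactly where the theorem's content lies. Write $M=|p_0|$. Your confinement height is $h=d_0+\diam(\Omega)/(2M+2)$, so the regime in which $h\leq 2d_0$ is the regime $2M+2\geq \diam(\Omega)/d_0$, i.e.\ the regime where $M+1\geq \diam(\Omega)/(2d_0)$ is \emph{already} far larger than the claimed bound (since $d_0^{-1}\geq d_0^{-(1-\delta)/(n+2)}$ for $d_0<1$); there your transport argument proves nothing that the regime hypothesis does not. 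In the opposite regime $d_0<\diam(\Omega)/(2M+2)$ you assert $|p_0|+2>\diam(\Omega)/(2d_0)$, but the regime condition gives precisely the reverse inequality $M+1<\diam(\Omega)/(2d_0)$: you have the dichotomy inverted. That regime (bounded or slowly growing $|p_0|$) is the one the theorem must rule out, and there your probe ball $B_1(p_0-(2M+2)\nu)$ gives $h$ of order $\diam(\Omega)$, so the cap $C_h$ is essentially all of $\Omega$ and the mass comparison yields no information. The defect is structural: because the target density lower bound on your probe degrades like $(\text{displacement})^{-(n+2)}$ while the confinement improves only like $(\text{displacement})^{-1}$, no choice of displacement of a fixed unit ball makes the argument work uniformly in $M$.

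The paper's fix is a differently shaped probe: not a displaced ball, but the thin truncated cone $E^*_\theta=\{p:\norm{\tfrac{p-p_0}{\norm{p-p_0}}-v_0}\leq\theta,\ \norm{p-p_0}\leq 1\}$ with aperture $\theta=\sqrt{d_0/(6R_0)}$ depending on $d_0$. Monotonicity then confines the preimage to the cone $E_\theta=\{x:\inner{x-x_0}{-v_0}\leq\theta\norm{x-x_0}\}$, and Lemma \ref{lemma:Cone inclusion} (using the enclosing ball) shows $E_\theta\subset B^n_{\sqrt{(1+4R_0)d_0}}(x_\partial)$ with $d(x,\partial\Omega)\leq 2d_0$ there, \emph{independently of} $|p_0|$; the price, $\norm{E^*_\theta}\sim\theta^{n-1}\sim (d_0/R_0)^{(n-1)/2}$, is exactly compensated on the source side, where the slice estimate of Lemma \ref{lemma:slice estimate} gives $\Hvol[n-1]{\Omega_t\cap\tilde{\mathcal C}}\lesssim (1+L)^{n-1}(R_0d_0)^{(n-1)/2}$, so the $d_0^{(n-1)/2}$ factors cancel and one is left with $(\norm{p_0}+2)^{-(n+2)}\lesssim d_0^{1-\delta}$. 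Note also that the step you label ``the main obstacle'' (the $(n-1)$-dimensional measure bound on the distance level sets inside the small cylinder) is not a routine combination of the ingredients you list: it is the content of Lemma \ref{lemma:slice estimate}, which requires showing that for $t<2d_0$ the convex sets $\Omega_t$ are $L$-Lipschitz graphs inside $\tilde{\mathcal C}$ (using Remark \ref{rem: Omega boundary} and a supporting-hyperplane/reflection argument), and would need to be carried out in any completed version of your argument as well. Finally, even where your Regime A conclusion holds, the constant you produce depends on $\diam(\Omega)$ (equivalently on $\rho$), whereas the paper's single-probe argument yields $\Lambda$ depending only on $R_0,L,n,\delta$ (and $C_0$).
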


\subsection{Relation with previous results}

The results in this note will not be surprising to the reader familiar with the Gauss curvature equation or with the optimal transport problem. That being said, we are not aware of any past work that explicitly mentions Problem \ref{problem:main} in the form stated there. In the past, the prescribed Gauss curvature problem has been viewed as an optimal transport problem where the target domain lies in Euclidean space \cite{McCann95,Urbas84}, rather than the hemisphere as in Problem \ref{problem:main}. Moreover, the prescribed Gauss curvature equation has generally been studied with a Dirichlet boundary condition, which is not as amenable to optimal transport methods; the standard ``boundary condition'' arising in optimal transport theory is to prescribe the image of the transport map, which corresponds to a (nonlinear) second boundary value condition from PDE theory. The motivation behind the formulation in Problem \ref{problem:main} is that one natural choice of ``boundary condition'' different from Dirichlet conditions for the prescribed Gauss curvature problem would be to prescribe the image of the Gauss map directly.

In \cite{Urbas84}, Urbas studied solutions to the prescribed Gauss curvature equation where $K$ has the maximal allowable mass, that is
\begin{align}\label{eqn:critical mass condition}
  \int_{\Omega}K(x)\;dx = \omega_n,    
\end{align}
and showed that a solution to this problem are uniquely determined without specifying any Dirichlet boundary conditions. Moreover, Urbas showed the gradient of the solution blows up along the boundary of $\Omega$, under a growth condition on $K$ near $\partial \Omega$; a similar result can be expected in the optimal transport framework. First, a solution to the Gauss curvature equation \eqref{eqn:Gauss curvature in terms of u} where $K$ is non-negative and satisfies \eqref{eqn:critical mass condition} must automatically satisfy $\mathcal{G}u(\Omega) = \mathbb{S}^n_-$ by mass balance --in particular, this can be viewed as a ``hidden'' boundary condition for this case. Secondly, under the optimal transport formulation one expects the boundary of $\Omega$ to be mapped by the transport map to the boundary of $\Omega^* = \mathbb{S}^n_-$, and thus the normal to the graph of $u$ should become horizontal as one approaches $\partial \Omega$. We show a quantitative version of this claim in Theorem \ref{theorem:critical case boundary behavior}.

One can also obtain regularity results via the optimal transport theory. The formulation as an optimal transport problem leads to regularity results in a more or less straightforward manner.  The key geometric condition in order to invoke the optimal transport regularity theory turns out to be equivalent to geodesic convexity of the prescribed image of the Gauss map. If the target domain does not touch the boundary of the hemisphere, the standard optimal transport theory in \cite{GK14} can be applied, while when the target domain reaches the boundary of the hemisphere we can appeal to the non-compact case theory in \cite{CorderoFigalli19}.

We mentioned the equivalence between Problem \ref{problem:main} and a second boundary value problem, in this regard we refer to previous work by Delano\"e for the two dimensional case \cite{Delanoe91} and by Urbas for general dimensions \cite{Urbas97}. We also mention work by Nadirashvilli \cite{Nadirashvili83} on uniqueness for the second boundary value problem, including equations for prescription of curvature. 

Finally, we raise one possible future direction. Suppose $\Omega$ is a $C^{1, 1}$, uniformly convex domain. Fix a curvature function $K\in C^{1, 1}(\Omega)\cap C^{1, 1}(\overline\Omega)$, $K \geq 0$ whose total mass is strictly smaller than $\omega_n$, satisfying $K(x)\leq C d(x, \partial \Omega)$ for some $C>0$ in some neighborhood of $\partial \Omega$. Then for any $\phi\in C^{1, 1}(\overline\Omega)$, it is possible to find a solution $u_\phi\in C^2(\Omega)\cap C^{0, 1}(\overline\Omega)$ solving equation \eqref{eqn:Gauss curvature in terms of u} with $u_\phi\equiv \phi$ on $\partial \Omega$ (see \cite[Theorem 1.1]{TrudingerUrbas83}). Then one can consider the map
\begin{align*}
  \phi \mapsto \mathcal{G}u_\phi(\Omega) \subset \mathbb{S}^n_-.
\end{align*}
This map can be viewed as a ``Dirichlet-to-second boundary value map'' and contains information on the relationship between the Dirichlet problem and the optimal transport version of the prescribed Gauss curvature problem. It would be of interest to analyze various properties of this map: for example a natural question is the following: given a geodesically convex set $\Omega^* \subset \mathbb{S}^n_-$, what can we say about the set of functions $\phi$ for which $\Omega_\phi^* = \mathcal{G}u_\phi(\Omega)$? 

\section{Basics of Optimal transport}\label{section: OT}
First we recall the optimal transport problem. We will write $\mathcal{P}(X)$ to denote all Borel probability measures on a topological space $X$.
\begin{definition}
  Let $\Omega$, $\Omega^*$ be Polish spaces (separable, completely metrizable spaces), $c: \Omega\times \Omega^*\to \R\cup \{+\infty\}$ Borel measurable, and $\mu\in \mathcal{P}(\Omega)$, $\nu\in \mathcal{P}(\Omega^*)$. A solution of the \emph{Monge problem} taking $\mu$ to $\nu$ is a Borel map $T: \Omega\to \Omega^*$ defined $\mu$-a.e. such that
  \begin{align}\label{eqn: M-OT}\tag{M-OT}
      \int_\Omega c(x, T(x))d\mu(x)=\min_{S_\#\mu=\nu} \int_\Omega c(x, S(x))d\mu(x),
  \end{align}
  where the \emph{pushforward measure} under $S$ is defined by $S_\#\mu(E):=\mu(S^{-1}(E))$ for any Borel $E\subset \Omega^*$.
  
  Let $$\Pi(\mu, \nu)=\{\pi\in \mathcal{P}(\Omega\times \Omega^*)\mid (\proj_\Omega)_\#\pi=\mu, (\proj_{\Omega^*})_\#\pi=\nu \}.$$ A solution of the \emph{Kantorovich problem} between $\mu$ and $\nu$ is a measure $\pi\in \Pi(\mu, \nu)$ satisfying 
    \begin{align}\label{eqn: K-OT}\tag{K-OT}
      \int_{\Omega\times \Omega^*} c(x, \ybar)d\pi(x, \ybar)=\min_{\tilde{\pi}\in\Pi(\mu,\nu)} \int_{\Omega\times \Omega^*} c(x, \ybar)d\tilde{\pi}(x, \ybar).
  \end{align}
\end{definition}
The existence and regularity theory for optimal transport is intricately related to the theory of $c$-convex functions which we recall here.
\begin{definition}
$u: \Omega\to \R\cup \{+\infty\}$ is \emph{$c$-convex} if there is a set $\mathcal{A}\subset \Omega^* \times \R$ such that 
\begin{align*}
u(x)&=\sup_{(\ybar, \lambda)\in \mathcal{A}}(-c(x,\ybar)+\lambda).    
\end{align*}
\end{definition}
We also recall the $c$-subdifferential. 
\begin{definition}
  If $u$ is $c$-convex we say a point \emph{$\ybar \in \Omega^*$ is supporting to $u$ at $x_0 \in \Omega$} if
  \begin{align*}
    u(x) \geq u(x_0)+c(x_0, \ybar)-c(x_0, \ybar) \textnormal{ for all } x \in \Omega.
  \end{align*}
  The set of points $\ybar$ supporting  to $u$ at $x$ will be denoted by $\partial_cu(x)$, and is called the \emph{$c$-subdifferential of $u$ at $x$}. This defines a map from $\Omega$ into Borel subsets of $\Omega^*$. 
\end{definition}
\begin{remark}\label{rem: convex case}
  In the case when $\Omega\subset \R^n$ and $\Omega^*=\R^n$ with $c(x, \ybar)=-\inner{x}{\ybar}$, a $c$-convex function is a convex (lower semicontinuous) function and the $c$-subdifferential is the usual subdifferential. We will simply denote this case by $\subdiff{u}{x}$.
\end{remark}
Finally, we recall the following standard condition for existence theory.
\begin{definition}
Suppose $\Omega$ and $\Omega^*$ are subsets of smooth manifolds. A smooth cost function $c$ is said to be \emph{bi-Twisted} if for any $x_0\in \Omega$ and $\ybar_0\in\Omega^*$, the maps
\begin{align*}
    \Omega^*\ni\ybar&\mapsto -D_x c(x_0, \ybar),\\
    \Omega\ni x&\mapsto -D_{\ybar}c(x, \ybar_0).
\end{align*}
are $C^1$ diffeomorphisms. The inverses of these maps are called the \emph{$c$- and $c^*$-exponential maps}, denoted by $\cExp{x_0}{\cdot}$ and $\cstarExp{\ybar_0}{\cdot}$ respectively.
\end{definition}
Then the following existence theorem can be deduced, for example from \cite[Theorem 5.10 (iii) and Theorem 5.30]{Vil09}. This existence result originates in work of Brenier for the cost given by distance squared in Euclidean space, McCann for the case of general Riemannian manifolds, Gangbo and McCann for strictly convex or concave functions of the distance on Euclidean space, and Ma, Trudinger, and Wang for more general bi-Twisted cost functions (see \cite{Bre91, McC01, GM96, MTW05}). We note that the referenced theorems apply to domains that are open subsets of smooth manifolds, as a Polish space is a separable, completely \emph{metrizable} metric space (and thus does not have to be complete itself).
\begin{theorem}\label{theorem: OT}
Suppose $\Omega$ and $\Omega^*$ are open subsets of smooth Riemannian manifolds, $c$ is smooth and bi-Twisted, and $\mu$ and $\nu$ are Borel probability measures on $\Omega$ and $\Omega^*$ respectively, where $\mu$ is absolutely continuous with respect to the volume measure on $\Omega$. 
Additionally, suppose there exist continuous, real valued functions $(a, b)\in L^1(\mu)\times L^1(\nu)$ such that $\norm{c(x, \ybar)}\leq a(x)+b(\ybar)$ for all $(x, \ybar)\in \Omega\times \Omega^*$. 
If the minimum value in \eqref{eqn: K-OT} is finite, then there exists a $c$-convex function $u:\Omega \to \mathbb{R}\cup \{+\infty\}$ and the map $T_u: \Omega\to \Omega^*$, $T_u(x):=\cExp{x}{Du(x)}$ (defined for $\mu$-a.e. $x$) is a minimizer in \eqref{eqn: M-OT}.

We will refer to such a function $u$ as a \emph{Brenier solution} of \eqref{eqn: M-OT}.
\end{theorem}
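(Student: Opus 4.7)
The plan is to deduce this existence and uniqueness-of-map statement from Kantorovich duality together with the bi-Twist condition, following the by-now standard arguments of Brenier, Gangbo--McCann, and Ma--Trudinger--Wang. First I would invoke Kantorovich's duality theorem to identify the minimum in \eqref{eqn: K-OT} with the supremum of $\int \phi\,d\mu + \int \psi\,d\nu$ over Borel pairs $(\phi,\psi)$ satisfying $\phi(x)+\psi(\ybar)\leq c(x,\ybar)$. The envelope bound $\norm{c(x,\ybar)}\leq a(x)+b(\ybar)$ with $a\in L^1(\mu)$ and $b\in L^1(\nu)$ ensures that the dual value is finite, that the standard $c$-concave double conjugation preserves integrability, and that a maximizing pair $(\phi,\phi^c)$ exists in which $u:=-\phi$ is a $c$-convex function on $\Omega$ with values in $\R\cup\{+\infty\}$.

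Second, by Prokhorov's theorem the set $\Pi(\mu,\nu)$ is weakly compact, and under the same integrability bound the functional $\pi\mapsto \int c\,d\pi$ is lower semicontinuous, so an optimal Kantorovich plan $\pi\in \Pi(\mu,\nu)$ exists. Complementary slackness between the attained primal and dual optimizers forces $\pi$ to be concentrated on the $c$-subdifferential graph $\{(x,\ybar):\ybar\in \csubdiff{u}{x}\}$. Third, smoothness of $c$ renders $u$ locally semiconvex in local charts, so absolute continuity of $\mu$ together with a Rademacher-type theorem yields differentiability of $u$ at $\mu$-a.e. $x$. At any such point the envelope condition $u(x')-c(x',\ybar)\leq u(x)-c(x,\ybar)$ gives the first order identity $-D_x c(x,\ybar)=Du(x)$ for every $\ybar\in \csubdiff{u}{x}$; the bi-Twist assumption makes $\ybar\mapsto -D_x c(x,\ybar)$ a diffeomorphism, so $\csubdiff{u}{x}=\{\cExp{x}{Du(x)}\}=\{T_u(x)\}$ is a singleton $\mu$-almost everywhere. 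Hence $\pi$ is supported on the graph of $T_u$, which gives $(T_u)_{\#}\mu = \nu$ and the identification of $T_u$ as a minimizer in \eqref{eqn: M-OT}.

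The main obstacle is the interplay between dual attainment and the passage from a Kantorovich plan to a Monge map: one must establish existence of a $c$-convex maximizer of the dual (where non-compactness of $\Omega^*$ is controlled by the envelope $a+b$) and verify $\mu$-a.e. differentiability of $u$ despite only having semiconvexity in charts. Both of these steps are precisely the content of \cite[Theorem 5.10 (iii) and Theorem 5.30]{Vil09}, so the proof ultimately consists of checking that the hypotheses of those results are satisfied in the present setting, which is immediate from smoothness and bi-Twist of $c$ together with the stated integrability bound.
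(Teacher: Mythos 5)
Your proposal is correct and follows essentially the same route as the paper, which does not prove this theorem directly but deduces it from \cite[Theorem 5.10 (iii) and Theorem 5.30]{Vil09} after noting that the integrability envelope and the bi-Twist (single-valuedness of the $c$-subdifferential at differentiability points) make those results applicable. Your duality/complementary-slackness/Rademacher outline is precisely the standard argument underlying those cited theorems, so there is no substantive difference in approach.
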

\begin{remark}
It is well known that if $\mu=f \dVol[\Omega]$ and $\nu=g\dVol[\Omega^*]$, under relatively mild conditions on the cost function we have for almost every $x$,
  \begin{align}\label{eqn: jacobian equation}
      \det(DT_u(x))=\frac{f(x)}{g(T_u(x))},
  \end{align}
(for an appropriate weak interpretation of $\det(DT_u(x))$). The form of this pointwise a.e. equation will motivate our choice of cost function in the next section.
\end{remark}

\section{Prescribed Gauss curvature as optimal transport with hemisphere target}\label{section:Gauss Curvature as OT}
\subsection{Geometric motivation}\label{section:geometric motivation}
Consider a convex domain $\Omega\subset \R^n$, and a function $u: \Omega\to \R$ convex, smooth, and let 
\begin{align*}
   {\Sigma_u}:=\graph(u)=\{(x, u(x))\in \R^{n+1}\mid x\in \Omega\}.
\end{align*}
Define also the map $\Phi_u(x):=(x, u(x))$, which sends $\Omega$ to ${\Sigma_u}$. In more classical terminology $\Phi_u$ is a parametrization of ${\Sigma_u}$.

Now let $N_{\Sigma_u}: {\Sigma_u} \to \S^n$ be the Gauss map (i.e., $N_{\Sigma_u}(\zbar)$ is the outward normal vector to ${\Sigma_u}$ at $\zbar\in {\Sigma_u}$). Then
\begin{align*}
    \det D(N_{\Sigma_u}\circ \Phi_u)(x)&=(\det DN_{\Sigma_u}(\Phi_u(x)))(\det D\Phi_u(x))=G_{\Sigma_u}(\Phi_u(x))(\det D\Phi_u(x)),
\end{align*}
where $G_{\Sigma_u}(\zbar)$ is the Gauss curvature of ${\Sigma_u}$ at $\zbar\in {\Sigma_u}$. Since the Gauss curvature is described by a prescribed change of volume induced by the Gauss map, whose range lies in $\S^n_-$, it is natural to see if this situation can be described by an optimal transport problem where the target measure $\nu$ is supported in $\S^n_-$. We now make some heuristic calculations and find there turns out to be an appropriate choice for $\nu$.

We first recall a classical calculation, which is provided here for completeness.
\begin{proposition}\label{prop: Jacobian of Phi_u}
  Let $x\in \Omega$. The determinant of $(D\Phi_u)_x:T_x\Omega\to T_{\Phi_u(x)}{\Sigma_u}$ is given by
  \begin{align*}
    \det((D\Phi_u)_x) = \sqrt{1+|\nabla u(x)|^2}.      
  \end{align*}  
\end{proposition}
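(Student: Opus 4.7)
The plan is to compute the Jacobian directly using the Gram matrix formula, since $\Phi_u$ maps between inner product spaces of different dimensions ($n$ and $n+1$). First I would identify the differential: for $v \in T_x\Omega \cong \R^n$, the chain rule gives
\begin{align*}
(D\Phi_u)_x(v) = (v,\, \nabla u(x)\cdot v) \in T_{\Phi_u(x)}\Sigma_u \subset \R^{n+1}.
\end{align*}
In particular, taking the standard basis $\{e_i\}_{i=1}^n$ of $T_x\Omega$, the image vectors are $\partial_i \Phi_u = (e_i, u_{x_i}(x))$.

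Next, since $(D\Phi_u)_x$ is a linear injection into a higher-dimensional Euclidean space, the correct notion of determinant is $\det((D\Phi_u)_x) = \sqrt{\det G}$, where $G$ is the Gram matrix of the image vectors with respect to the ambient inner product on $\R^{n+1}$. A direct computation yields
\begin{align*}
G_{ij} = \langle \partial_i \Phi_u, \partial_j \Phi_u \rangle = \delta_{ij} + u_{x_i}(x) u_{x_j}(x),
\end{align*}
so $G = I_n + \nabla u(x) \nabla u(x)^T$ is a rank-one perturbation of the identity.

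Finally, I would invoke the matrix determinant lemma (or equivalently observe that the eigenvalues of $I + ww^T$ are $1+|w|^2$ with eigenvector $w$, and $1$ with multiplicity $n-1$ on $w^\perp$) to conclude
\begin{align*}
\det G = 1 + |\nabla u(x)|^2,
\end{align*}
from which taking the square root gives the stated formula. There is no real obstacle here — this is standard surface-area folklore — so the main thing to be careful about is stating precisely what $\det$ means for a map between spaces of different dimensions, which is what the Gram determinant convention handles.
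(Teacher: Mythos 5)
Your proof is correct, but it takes a different route from the paper. The paper's proof works with an adapted orthonormal basis: it picks $v_n = \nabla u(x)/|\nabla u(x)|$ together with an orthonormal complement $v_1,\dots,v_{n-1}$ in $\mathbb{R}^n$, builds a corresponding orthonormal basis $\{w_i\}$ of $T_{\Phi_u(x)}\Sigma_u$ by hand, observes that $D\Phi_u$ sends $v_i \mapsto w_i$ for $i<n$ and $v_n \mapsto \sqrt{1+|\nabla u(x)|^2}\, w_n$, and reads off the determinant of the resulting diagonal matrix (with a separate case for $\nabla u(x)=0$). Your argument instead uses the standard basis throughout, forms the Gram matrix $G = I_n + \nabla u(x)\nabla u(x)^T$ of the image vectors $(e_i, u_{x_i})$, and invokes the matrix determinant lemma (or the rank-one eigenvalue observation) to get $\det G = 1+|\nabla u(x)|^2$, so $\sqrt{\det G}$ gives the claim. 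Both are sound: the Gram-determinant route is shorter, avoids the case split at critical points of $u$, and makes explicit the convention for $\det$ of a map into a higher-dimensional ambient space (it agrees, up to the irrelevant sign, with the paper's determinant taken with respect to orthonormal bases of the $n$-dimensional source and tangent space); the paper's adapted-basis computation is more hands-on and displays the geometry directly, namely that the stretching occurs only in the gradient direction with factor $\sqrt{1+|\nabla u(x)|^2}$, which is the same structural fact your eigenvalue observation encodes algebraically.
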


\begin{proof}
Fix $x\in \Omega$ and suppose $\nabla u(x)\neq 0$. Let $v_1, \ldots, v_{n-1}\in \R^n$ be such that 
\begin{align*}
  \{v_1,\ldots, v_{n-1}, \nabla u(x)\}
\end{align*}
is an orthogonal set with $\norm{v_i}=1$ for $i=1,\ldots, n-1$ and define $v_n:=\frac{\nabla u(x)}{\norm{\nabla u(x)}}$. If 
\begin{align*}
    w_i:&=\begin{cases}
    (v_i, 0)\in \R^{n+1},&i=1,\ldots, n-1,\\
    \frac{(\nabla u(x), \norm{\nabla u(x)}^2)}{\norm{\nabla u(x)}\sqrt{1+\norm{\nabla u(x)}}},& i=n
    \end{cases}
\end{align*} then $(\nabla u(x), -1)\perp w_i$ for all $i=1,\ldots, n$. Thus the collection $\{w_1, \ldots, w_n\}$ is an orthonormal basis of $T_{\Phi_u(x)}{\Sigma_u}$. Then we calculate,
\begin{align*}
    \left.\frac{d}{dt} \Phi_u(x+tv_i)\right \vert_{t=0}&=\left.\frac{d}{dt} (x+tv_i, u(x+tv_i))\right \vert_{t=0}\\
    &=(v_i, \inner{\nabla u(x)}{v_i})\\
    &=\begin{cases}
    w_i, &i=1,\ldots, n-1,\\
    (\frac{\nabla u(x)}{\norm{\nabla u(x)}}, \norm{\nabla u(x)})=w_n\sqrt{1+\norm{\nabla u(x)}^2},&i=n.
    \end{cases}
\end{align*}
Then 

\begin{align*}\det D\Phi_u(x)=\det \begin{pmatrix}\Id_{n-1}&0\\
0&\sqrt{1+\norm{\nabla u(x)}^2}\end{pmatrix}=\sqrt{1+\norm{\nabla u(x)}^2}.
\end{align*}

If $\nabla u(x)=0$, let $\{v_1, \ldots v_n\}$ be an arbitrary orthonormal basis of $\R^n$, clearly by defining $w_i=(v_i, 0)$ for all $i=1,\ldots, n$ we obtain an orthonormal basis for $T_{\Phi_u(x)}{\Sigma_u}$ in this case as well. Thus calculating gives $\det D\Phi_u(x)=\det \Id_n=1=\sqrt{1+\norm{\nabla u(x)}^2}$.

\end{proof}

Now suppose $K: \Omega \to \R_+$ is a smooth, nonnegative function and suppose we wish to prescribe the Gauss curvature of ${\Sigma_u}$ as $K$, i.e., we want $G_{\Sigma_u}(\Phi_u(x))=K(x)$ at every $x\in \Omega$. Since $N(\Phi_u(x))=\frac{(\nabla u(x), -1)}{\sqrt{1+\norm{\nabla u(x)}^2}}$, using Proposition \ref{prop: Jacobian of Phi_u} we see that it would suffice to have
\begin{align*}
   \det(D(N\circ \Phi_u)(x))=\frac{K(x)}{-N(\Phi_u(x))_{n+1}} 
\end{align*}
as this would imply
\begin{align*}
    G_{\Sigma_u}(\Phi_u(x))&=\frac{\det(D(N\circ \Phi_u)(x))}{\sqrt{1+\norm{\nabla u(x)}^2}}\\
    &=\frac{K(x)}{-N(\Phi_u(x))_{n+1}\sqrt{1+\norm{\nabla u(x)}^2}}\\
    &=K(x).
\end{align*}
Comparing this expression with the equation in \eqref{eqn: jacobian equation}, we find that if is possible to choose a cost function $c$ in such a way that $c$-convex functions are convex and $T_u=\cExp{\cdot}{Du(\cdot)}=N\circ \Phi_u$ at points where $u$ is differentiable, we can obtain the above relation with the choice of source measure on $\Omega$ which has density $K$ and a target measure on a subdomain $\Omega^*$ of $\S^n_-$ with density $-y_{n+1}$ with respect to $\dVol[\S^n]$, the canonical volume form on $\S^n$. A key fact is that $\det(D\Phi_u(x))$ is a function of $\nabla u(x)$, but not the value $u(x)$, which makes the selection of a target measure $\nu$ possible.

In order to be able to find such a subdomain, it is necessary that
\begin{align*}
    \int_\Omega K&\leq -\int_{\S^n\cap \{y_{n+1}<0\}}y_{n+1}\dVol[\S^n](y)=\int_{B^{\R^n}_1(0)}\frac{\sqrt{1-\norm{x}^2}}{\sqrt{1-\norm{x}^2}}dx=\omega_n,
\end{align*}
where $\omega_n$ is the Lebesgue measure of the unit ball in $\R^n$. Note this condition exactly matches the classical (sharp) necessary condition for solvability of the prescribed Gauss curvature problem.

Guided by this geometric intuition, we will introduce an appropriate cost function defined on a subdomain of $\R^n\times \S^n_-$.

\subsection{A cost function with hemisphere target}

We now analyze the prescribed Gauss curvature problem in terms of the optimal transport problem with target measure as described in the previous subsection, this will lead us to the proof of Theorem \ref{theorem:existence and uniqueness}. We will use $x$ to denote points in $\mathbb{R}^n$  and $\ybar$ to denote points in $\mathbb{S}^n$. Moreover, we shall write $\ybar=(y, y_{n+1})$ where $y\in \R^n$, $y_{n+1} \in \R$,  and write $\S^n_-:=\{\ybar\in \S^n\mid y_{n+1}<0\}$ for the open lower hemisphere. For this section, we will be interested in the optimal transport problem with $\Omega\subset \R^n$ and $\Omega^*\subset \S^n_-$, along with the cost function $c: \Omega\times \Omega^*\to \R$ defined by 
\begin{align*}
    c(x,\ybar)=\frac{\inner{x}{y}}{y_{n+1}}.
\end{align*}

It is evident that for our choice of cost function $c$, $u$ is $c$-convex if and only if it is convex and lower semicontinuous. Moreover it is also clear that if $\Omega^*$ is compactly contained in $\S^n_-$, then a $c$-convex function is uniformly Lipschitz on $\Omega$.

First we show the bi-Twistedness condition.
 \begin{proposition}\label{proposition:c exponential at x}
   The cost function $c$ is bi-Twisted, with 
  \begin{align*}
    \cExp{x}{p} = \left ( \frac{p}{\sqrt{1+|p|^2}}, - \frac{1}{\sqrt{1+|p|^2}} \right )       
  \end{align*}  
  for any $x \in \Omega$ and $p \in \mathbb{R}^n$.
 \end{proposition}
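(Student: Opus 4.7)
The plan is to verify bi-Twistedness of $c$ by a direct computation of both partial gradients and to explicitly invert the resulting maps, thereby producing the formula stated for $\cExp{x}{p}$.

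First, since $c$ is linear in $x$, we have immediately
\begin{align*}
  -D_x c(x_0, \ybar) = -\frac{y}{y_{n+1}},
\end{align*}
which is independent of $x_0$. Because $y_{n+1}<0$ on $\S^n_-$, this map is the central projection of $\S^n_-$ onto the hyperplane $\{y_{n+1}=-1\}$ through the origin of $\R^{n+1}$, identified with $\R^n$. To invert it I would set $p=-y/y_{n+1}$, substitute $y=-p y_{n+1}$ into $|y|^2+y_{n+1}^2=1$, and use $y_{n+1}<0$ to obtain $y_{n+1}=-1/\sqrt{1+|p|^2}$ and $y=p/\sqrt{1+|p|^2}$. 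This yields exactly the claimed formula for $\cExp{x}{p}$, and smoothness of the inverse is manifest from the explicit expression.

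For the second (``$c^*$'') twist, the derivative $D_{\ybar} c(x, \ybar_0)$ must be understood as a tangential derivative on $\S^n$. A useful observation is that $c(x,\lambda\ybar)=c(x,\ybar)$ for $\lambda>0$, so $c$ is $0$-homogeneous in $\ybar$, and by Euler's identity the ambient gradient in $\R^{n+1}$ is automatically tangent to $\S^n$ at every point; no projection is needed. A direct calculation gives
\begin{align*}
  -D_{\ybar} c(x, \ybar_0) = \Bigl(-\tfrac{x}{y_{0,n+1}},\ \tfrac{\inner{x}{y_0}}{y_{0,n+1}^2}\Bigr),
\end{align*}
a linear map in $x$. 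Since its first $n$ components already form a linear isomorphism $\R^n\to\R^n$ (as $y_{0,n+1}\neq 0$), the map is injective, and a dimension count against the $n$-dimensional space $T_{\ybar_0}\S^n$ forces surjectivity; hence it is a linear isomorphism and in particular a $C^1$ diffeomorphism, with linear inverse.

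I do not anticipate any genuine obstacle: the calculation is elementary, and the only subtlety is the sign convention arising from $y_{n+1}<0$ when inverting the first map. The main conceptual points are recognizing the first twist map as a central projection and exploiting the $0$-homogeneity of $c$ in $\ybar$ to bypass the tangential projection in the second.
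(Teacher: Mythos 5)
Your proposal is correct and takes essentially the same route as the paper: compute $-D_xc(x,\ybar)=-y/y_{n+1}$ and invert it explicitly using $\norm{y}^2+y_{n+1}^2=1$ with $y_{n+1}<0$ to get the stated formula for $\cExp{x}{p}$, then compute the tangential $\ybar$-derivative and note it is linear and invertible in $x$. Your $0$-homogeneity/Euler-identity observation is a clean justification of the step the paper merely asserts (that the projection onto $T_{\ybar}\S^n$ leaves the ambient gradient unchanged), and your injectivity-plus-dimension-count argument replaces the paper's explicit inverse $x=-y_{n+1}q$; both are minor variations, not a different method.
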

 \begin{proof}
 Clearly (after identifying the tangent and cotangent spaces of $\R^n$ with $\R^n$)
 \begin{align*}
    -D_x c(x,\ybar) = -\frac{y}{y_{n+1}},
  \end{align*}
  which is $C^1$ on $\S^n_-$ by inspection, for a fixed $x\in \Omega$.
  
Fix $p\in \R^n$ and let $\ybar = (y,y_{n+1})$ be such that $-D_x c(x,\ybar) = p$, then since $\ybar\in \S^n_-$,
  \begin{align*}
    p = -\frac{y}{y_{n+1}}=\frac{y}{\sqrt{1-\norm{y}^2}},  
  \end{align*}
  while $\norm{p}^2 = \frac{\norm{y}^2}{1-\norm{y}^2}$ yields
  \begin{align*}
y_{n+1} = -\frac{1}{\sqrt{1+\norm{p}^2}}.
  \end{align*}
  Combined, this gives the claimed formula for $\cExp{x}{p}$ and shows $-D_xc(x, \cdot)$ is invertible; by inspection $\cExp{x}{\cdot}$ is also $C^1$.
  
  On the other hand, (viewing $T_{\ybar}\S^n$ as embedded in $\R^{n+1}$)
   \begin{align}\label{eqn: D_y}
    -\nabla_{\ybar}c(x,\ybar) = \proj_{T_{\ybar}\S^n}\left(-\frac{x}{y_{n+1}}, \frac{\langle x,y\rangle}{y_{n+1}^2}\right)=\left(-\frac{x}{y_{n+1}}, \frac{\langle x,y\rangle}{y_{n+1}^2}\right),
  \end{align}
  which is again $C^1$ by inspection on $\Omega$ for a fixed $\ybar$. If $-\nabla_{\ybar}c(x,\ybar)=\qbar$ for some $\qbar=(q, q_{n+1})\in T_{\ybar}\S^n\subset R^{n+1}$, then we immediately have $x=y_{n+1} q$.  This shows both the invertibility of $-\nabla_{\ybar}c(\cdot, \ybar)$ on $\Omega$ and $C^1$-ness of its inverse, hence also of $-D_{\ybar}c(\cdot, \ybar)$, thus we have shown bi-Twistedness of $c$.
 \end{proof}

Next we verify what was suggested by the computation in Section \ref{section:geometric motivation}. Namely, we are going to show the $c$-exponential map has the desired geometric properties in order to relate our optimal transport problem with the Gauss map. In the following, for a function $u: \Omega \to \R\cup \{+\infty\}$, we write $\epi(u):=\{(x, x_{n+1})\in \Omega\times \R\mid x_{n+1}\geq u(x)\}$ for the \emph{epigraph} of $u$, and $N_{(x, u(x))}(\epi(u))=\{\vbar\in \S^n\mid \inner{\vbar}{\zbar-(x, u(x))}\leq 0,\ \forall\; \zbar\in \epi(u)\}$ is the set of outward unit normals to $\epi(u)$ at $(x, u(x))$. See also Remark \ref{rem: convex case}.

\begin{proposition}\label{prop: gauss map c-exp}
  Let $u: \Omega \to \R\cup \{+\infty\}$ be $c$-convex. Then for any $x\in \Omega$ and $p\in \subdiff{u}{x}$, 
  \begin{align*}
      \cExp{x}{p}\in \normal{(x, u(x))}{\epi(u)}.
  \end{align*}
\end{proposition}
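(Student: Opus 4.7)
The plan is to unwind both sides using the explicit formulas already established. By Proposition \ref{proposition:c exponential at x}, we have the explicit form
\begin{align*}
\cExp{x}{p}=\left(\frac{p}{\sqrt{1+|p|^2}},\,-\frac{1}{\sqrt{1+|p|^2}}\right),
\end{align*}
so the candidate normal vector is completely concrete in terms of $p$. On the other hand, Remark \ref{rem: convex case} identifies $c$-convexity with ordinary convexity plus lower semicontinuity, and $\subdiff{u}{x}$ with the ordinary convex subdifferential. So $p\in\subdiff{u}{x}$ gives the classical subgradient inequality $u(z)\geq u(x)+\inner{p}{z-x}$ for all $z\in\Omega$.

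The key step is then a direct computation: take an arbitrary point $(z,z_{n+1})\in\epi(u)$ and evaluate
\begin{align*}
\inner{\cExp{x}{p}}{(z,z_{n+1})-(x,u(x))}=\frac{1}{\sqrt{1+|p|^2}}\Bigl(\inner{p}{z-x}-(z_{n+1}-u(x))\Bigr).
\end{align*}
Since $(z,z_{n+1})\in\epi(u)$ means $z_{n+1}\geq u(z)$, combining with the subgradient inequality yields $z_{n+1}-u(x)\geq\inner{p}{z-x}$, so the bracket above is nonpositive. This gives exactly the defining inequality for $\cExp{x}{p}\in\normal{(x,u(x))}{\epi(u)}$.

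I do not expect any real obstacle here; the statement is essentially a bookkeeping check that the particular cost $c(x,\ybar)=\inner{x}{y}/y_{n+1}$ was designed to make the $c$-exponential agree with the outer unit normal to the epigraph. The only mild subtlety is that $\cExp{x}{p}$ lies in $\S^n_-$ (which is automatic from the formula, since the last coordinate is strictly negative), so we get not just a supporting hyperplane but the correctly oriented outward unit normal.
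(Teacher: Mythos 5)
Your proposal is correct and follows essentially the same route as the paper's proof: plug in the explicit formula for $\cExp{x}{p}$ from Proposition \ref{proposition:c exponential at x}, pair it with $(z,z_{n+1})-(x,u(x))$ for an arbitrary point of $\epi(u)$, and conclude nonpositivity from $z_{n+1}\geq u(z)$ together with the subgradient inequality. No gaps; the remark that the last coordinate is negative (so the vector lies in $\S^n_-$) is a harmless extra observation not needed for the stated inclusion.
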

\begin{proof}
Let $x\in \Omega$, $p\in\subdiff{u}{x}$. Then by Proposition \ref{proposition:c exponential at x}
\begin{align*}
    \cExp{x}{p}=(\frac{p}{\sqrt{1+\norm{p}^2}}, -\frac{1}{\sqrt{1+\norm{p}^2}}).
\end{align*}
Now if $\zbar=(z, z_{n+1})\in \epi(u)$, we must have $z\in \Omega$ and $u(z)\leq z_{n+1}$. Thus
\begin{align*}
    \inner{\zbar-(x, u(x))}{\cExp{x}{p}}&=\frac{\inner{z-x}{p}-(z_{n+1}-u(x))}{\sqrt{1+\norm{p}^2}}\\
    &\leq \frac{u(x)+\inner{z-x}{p}-u(z)}{\sqrt{1+\norm{p}^2}} \leq 0
\end{align*}
since $p\in \subdiff{u}{x}$. Thus $\cExp{x}{p}\in \normal{(x, u(x))}{\epi(u)}$.
\end{proof}
In particular, if $u$ is differentiable at $x\in \Omega$ we have $\subdiff{u}{x}=\{\nabla u(x)\}$ and since it is clear that $\cExp{x}{\nabla u(x)}$ is unit length, we must have that $\cExp{x}{\nabla u(x)}=N_{\Sigma_u}(\Phi_u(x))$ in the notation of Section \ref{section:geometric motivation}.

\begin{remark}\label{rem: euclidean density}
  
It turns out that the optimal transport formulation on the hemisphere we have introduced here is equivalent to the well known formulation with target domain $\R^n$ (see \cite{Urbas84}). We start by observing that $-D_{x}c(x, \cdot): \S^n_-\to \R^n$ is independent of $x\in \Omega$ and a diffeomorphism, let us denote it by $\Ybar(\cdot)$ (in particular, $\Ybar^{-1}=\cExp{x}{\cdot}$, also independent of $x$). We now calculate $\Ybar_\# (-y_{n+1}\mathds{1}_{\Omega^*})\dVol[\S^n]$ for some domain $\Omega^*\subset \S^n_-$. Fix a point $p\in \R^n$ and a standard basis vector $e_i\in \R^n$, then $v_i:=\frac{d}{dt}\Ybar^{-1}(p+te_i)\vert_{t=0}\in \R^{n+1}$ yields a vector tangent to $\S^n_-$. Calculating gives
\begin{align*}
    v_i&=\frac{d}{dt}\left(\frac{p+te_i}{\sqrt{1+\norm{p+te_i}^2}}, -\frac{1}{\sqrt{1+\norm{p+te_i}^2}}\right)\vert_{t=0}\\
    &=\frac{1}{(1+\norm{p}^2)^{\frac{3}{2}}}\left(e_i(1+\norm{p}^2)-p\inner{p}{e_i}, \inner{p}{e_i}\right)\\
    &=\frac{1}{(1+\norm{p}^2)^{\frac{3}{2}}}\left(e_i(1+\norm{p}^2)-p_ip, p_i\right).
\end{align*}
Thus viewing $\Ybar$ as a (global) coordinate chart on $\S^n_-$ we can write the coefficients of the Riemannian metric on $\S^n_-$ in this chart as
\begin{align*}
    g_{ij}&=\inner{v_i}{v_j}_{\R^{n+1}}=\frac{\delta_{ij}(1+\norm{p}^2)^2-2p_ip_j(1+\norm{p}^2)+p_ip_j\norm{p}^2+p_ip_j}{(1+\norm{p}^2)^3}\\
    &=\frac{\delta_{ij}(1+\norm{p}^2)-p_ip_j}{(1+\norm{p}^2)^2}.
\end{align*}
As a projection matrix, we see the eigenvalues of $g_{ij}$ are $(1+\norm{p}^2)^{-2}$ with multiplicity $1$, and $(1+\norm{p}^2)^{-1}$ with multiplicity $n-1$. Hence we calculate the canonical volume form on $\S^n_-$ in this chart as
\begin{align*}
    \sqrt{\det g_{ij}}dp&=\sqrt{(1+\norm{p}^2)^{-n-1})}=\frac{1}{(1+\norm{p}^2)^{\frac{n+1}{2}}}.
\end{align*}
Thus we find that
\begin{align*}
    \Ybar_\#(-y_{n+1}\mathds{1}_{\Omega^*})\dVol[\S^n]=\frac{-y_{n+1}\circ \Ybar^{-1}(p)}{(1+\norm{p}^2)^{\frac{n+1}{2}}}dp=\frac{1}{(1+\norm{p}^2)^{\frac{n+2}{2}}}dp
\end{align*}
on $\R^n$. Equivalently, this formula provides the expression for the form $-y_{n+1}\dVol[\Omega^*]$ in the coordinate chart defined by $\bar Y$. 

Thus if $T_u(x)=\cExp{x}{Du(x)}$ where $u$ is a Brenier solution as in the statement of Theorem \ref{theorem:existence and uniqueness}, we see that $Du=\Ybar\circ T_u$ satisfies 
\begin{align*}
    Du_\# Kdx=\Ybar_\#(T_u)_\#Kdx=\Ybar_\#(-y_{n+1}\mathds{1}_{\Omega^*})\dVol[\S^n]=\frac{\mathds{1}_{\Ybar^{-1}(\Omega^*)}}{(1+\norm{p}^2)^{\frac{n+2}{2}}}dp.
\end{align*}
In particular, if $\Ybar^{-1}(\Omega^*)$ is convex (i.e., if $\Omega^*$ is $c^*$-convex in the language of Definition \ref{def: c-convex domains} below), we can apply \cite[Lemmas 1 and 2]{Caf92} to see that $u$ is actually a ``generalized solution'' as defined by Urbas in \cite[Section 2]{Urbas84}.

\end{remark}

We are now ready to give the proof of our existence result Theorem \ref{theorem:existence and uniqueness}. For the first existence portion, we could reduce the problem to the usual prescribed Gauss curvature problem with target measure on $\R^n$, but here we elect to show existence solely in the framework of optimal transport with hemisphere target, to illustrate the simplicity of the approach.

\begin{proof}[Proof of Theorem \ref{theorem:existence and uniqueness}]

We consider the Kantorovich problem with cost $c(x,\ybar)$ and
\begin{align*}
  \mu = K \mathds{1}_{\Omega}\;dx,\;\nu = -y_{n+1}\mathds{1}_{\Omega^*}\dVol[\mathbb{S}^n].
\end{align*}
First we note $\norm{c(x, \ybar)}\leq \diam(\Omega)\frac{\norm{y}}{y_{n+1}}$ for any $(x, \ybar)\in \Omega\times \Omega^*$, where $\frac{\norm{y}}{y_{n+1}}\in L^1(-y_{n+1}\mathds{1}_{\Omega^*}\dVol[\S^n_-])$, and is also continuous on $\S^n_-$. Additionally, $\mu\otimes\nu\in \Pi(\mu, \nu)$ and 
\begin{align*}
    \int_{\Omega\times\Omega^*}c(x, \ybar)d(\mu\otimes\nu)(x, \ybar)&\leq\diam(\Omega)\int_\Omega K \int_{\Omega^*}\norm{y}\dVol[\S^n](\ybar)<\infty,
\end{align*}
showing that the value in \eqref{eqn: K-OT} will also be finite. Thus we can apply Theorem \ref{theorem: OT} to obtain existence of a Brenier solution $u$ from $\mu$ to $\nu$.

Since $Du_\#Kdx=\frac{dp}{(1+\norm{p}^2)^{\frac{n+2}{2}}}$ by Remark \ref{rem: euclidean density}, we may apply \cite[Proposition 4.2 and Theorem 4.4]{McCann97} to see $u$ solves equation \eqref{eqn:Gauss curvature in terms of u} almost everywhere in $\Omega$, and everywhere if $u$ happens to be $C^2$.

Finally, we can follow the proof of \cite[Theorem 10.38]{Vil09} to show the claim \eqref{eqn: gauss map image}. Indeed, it is sufficient to show that for any $x\in \overline{\Omega}\cap \dom(Du))$ and $\epsilon>0$, there exists $\delta>0$ such that $T_u(B_\delta(x)\cap \dom(Du))\subset B_\epsilon(T_u(x))$. If $x\in \dom(Du)$, then $T_u(x)\not\in \partial\S^n_-$, hence the claim follows as convexity of $u$ implies $Du$ is continuous on $\dom(Du)$ (see \cite[Theorem 25.5]{Roc70}).
\end{proof}

\subsection{Regularity and $c^*$-convexity of the target} 
We now turn to the regularity result, Theorem \ref{theorem:subcritical case regularity}.

First we recall that a key condition in the regularity theory of optimal transport is a convexity condition on the support of the target measure $\nu$.
\begin{definition}\label{def: c-convex domains}
  We say $\Omega^*$ is \emph{$c^*$-convex with respect to $\Omega$} if for any $x\in \Omega$, the set $-D_xc(x, \Omega^*)$ is a convex subset of $T_x\Omega$. Similarly, $\Omega$ is \emph{$c$-convex with respect to $\Omega^*$} if for any $\ybar\in \Omega^*$, the set $-D_{\ybar}c( \Omega, \ybar)$ is a convex subset of $T_{\ybar}\S^n_-$.
\end{definition}
It turns out that $c^*$ and $c$-convex sets have a geometrically simple description for our choice of $c$.
\begin{proposition}\label{prop: c-convex is geodesically convex}

\begin{enumerate}
    \item A subset $\Omega^*\subset \S^n_-$ is $c^*$-convex with respect to any $\Omega\subset \R^n$ if and only if it is a geodesically convex subset of $\S^n_-$.
    \item A subset $\Omega\subset \R^n$ is $c$-convex with respect to any $\Omega^*\subset \S^n_-$ if and only if it is a convex subset of $\R^n$.
\end{enumerate}

\end{proposition}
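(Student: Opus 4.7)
The plan is to leverage the explicit formulas for the partial derivatives of $c$ already derived in Proposition \ref{proposition:c exponential at x}:
\[
-D_x c(x,\ybar) = -\frac{y}{y_{n+1}}, \qquad -D_{\ybar} c(x,\ybar) = \left(-\frac{x}{y_{n+1}}, \frac{\langle x,y\rangle}{y_{n+1}^2}\right).
\]

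Part (2) follows almost immediately from the second formula. For each fixed $\ybar \in \S^n_-$, the map $x \mapsto -D_{\ybar}c(x,\ybar)$ is linear in $x$, and its injectivity is built into the proof of Proposition \ref{proposition:c exponential at x} (the first $n$ components recover $x = y_{n+1} q$). Since an injective linear map together with its inverse preserves convexity in both directions, $-D_{\ybar} c(\Omega, \ybar)$ is a convex subset of $T_{\ybar}\S^n_-$ if and only if $\Omega$ is convex in $\R^n$, and this equivalence holds for every $\ybar \in \S^n_-$.

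The substantive content is part (1). I would interpret the map $\Phi(\ybar) := -y/y_{n+1}$, which is independent of $x$, as the gnomonic (central) projection of $\S^n_-$ from the origin onto the affine hyperplane $\{y_{n+1}=-1\} \subset \R^{n+1}$, identified with $\R^n$. The key geometric fact is then that gnomonic projection sends great circle arcs bijectively to straight line segments: a great circle on $\S^n$ is the intersection with a $2$-dimensional linear subspace $V \subset \R^{n+1}$, and its image under $\Phi$ is the affine set $\{q \in \R^n : (q,-1) \in V\}$, which is either empty, a point, or a line. Combined with the observation that any two distinct points of the open hemisphere are joined by a unique minimizing geodesic whose image stays in $\S^n_-$ (the complementary arc on the great circle contains both antipodes and is strictly longer), this yields the desired equivalence: $\Omega^*$ is geodesically convex in $\S^n_-$ if and only if $\Phi(\Omega^*) = -D_x c(x,\Omega^*)$ is convex in $\R^n$.

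The main obstacle, if it can be called that, is organizing the geometric identification in (1) cleanly and verifying at the outset that geodesic convexity is unambiguously defined on the open hemisphere. Once the gnomonic interpretation is in place, both directions of each equivalence reduce to standard observations about affine subspaces of $\R^{n+1}$.
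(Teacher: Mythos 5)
Your proposal is correct and takes essentially the same route as the paper: part (2) is the same linearity observation, and part (1) rests on the same identification of $\ybar\mapsto -D_xc(x,\ybar)$ with central (gnomonic) projection onto $\{y_{n+1}=-1\}$. The only difference is presentational --- the paper verifies the great-circle/line correspondence by the explicit computation that $\cExp{x}{p_t}$ stays orthogonal to every normal $\Nbar$ of $\spn(\ybar_0,\ybar_1)$, whereas you invoke the standard gnomonic projection fact (together with the minimizing arc between two points of $\S^n_-$ remaining in $\S^n_-$), which is the same underlying argument.
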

\begin{proof}
To show claim (1), it suffices to prove that the image of any line segment in $\R^n$ under $\cExp{x}{\cdot}$ for a point $x\in \R^n$ lies on a geodesic in $\S^n$. To this end, let $\ybar_0$, $\ybar_1\in \S^n_-$, and suppose $\Nbar=(N, N_{n+1})\in \S^n$ is any unit vector orthogonal to the span of $\ybar_0$ and $\ybar_1$, thus
\begin{align}\label{eqn: orthogonal}
    \inner{N}{y_i}=N_{n+1}\sqrt{1-\norm{y_i}^2},\quad i=0,1.
\end{align}
 Then let $p_i:=-D_xc(x, \ybar_i)=\frac{y_i}{\sqrt{1-\norm{y_i}^2}}$ for $i=0$, $1$, and for $t\in [0, 1]$ write $p_t:=(1-t)p_0+tp_1$. We can then calculate,
 \begin{align*}
     \inner{\Nbar}{\cExp{x}{p_t}}&=\inner{(N, N_{n+1})}{(\frac{p_t}{\sqrt{1+\norm{p_t}^2}}, -\frac{1}{\sqrt{1+\norm{p_t}^2}})}=\frac{1}{\sqrt{1+\norm{p_t}^2}}(\inner{N}{p_t}-N_{n+1})\\
     &=\frac{1}{\sqrt{1+\norm{p_t}^2}}((1-t)\inner{N}{\frac{y_0}{\sqrt{1-\norm{y_0}^2}}}+t\inner{N}{\frac{y_1}{\sqrt{1-\norm{y_1}^2}}}-N_{n+1})=0
 \end{align*}
 by \eqref{eqn: orthogonal}. Since $\Nbar$ is an arbitrary normal to $\spn(\ybar_0, \ybar_1)$, this shows $\cExp{x}{p_t}$ lies on a great circle of $\S^n$ containing $\ybar_0$ and $\ybar_1$, hence on a geodesic segment connecting the two points.
 
 On the other hand, by \eqref{eqn: D_y} we see $-D_{\ybar}c(\cdot, \ybar)$ is a linear map for fixed $\ybar$, the second claim is clear.
\end{proof}

The other key structural condition in regularity theory of optimal transport is known as the (weak) Ma-Trudinger-Wang \eqref{MTW} condition.
\begin{definition}
  A cost function $c$ satisfies the \emph{weak Ma-Trudinger-Wang condition} if for any $(x, \ybar)\in \Omega\times \Omega^*$ and $V\in T_x\Omega$, $\eta\in T^*_x\Omega$ such that $\eta(V)=0$,
  \begin{align}\label{MTW}\tag{MTW}
      -(c_{ij, rs}-c_{ij, t}c^{t, q}c_{q, rs})c^{r, k}c^{s, l}(x, \ybar)V^iV^j\eta_k\eta_l\geq 0.
  \end{align}
  Here, we fix arbitrary local coordinates on $\Omega$ and $\Omega^*$ near $(x, \ybar)$, subscript indices before a comma indicate coordinate derivatives in $\Omega$ and indices after a comma indicate coordinate derivatives in $\Omega^*$. Moreover, $c^{i, j}$ are the entries of the matrix inverse of $c_{i, j}$.
\end{definition}
Of course since our cost function is linear in $x$, it trivially satisfies the weak \ref{MTW} condition.

\begin{proof}[Proof of Theorem \ref{theorem:subcritical case regularity}]
Suppose first that $\Omega^*$ is a geodesically convex, compact subset of $\S^n_-$. By Proposition \ref{prop: c-convex is geodesically convex}, $\Omega^*$ is $c^*$-convex with respect to $\Omega$, while $\Omega$ is (compactly) contained in some large ball, which is $c$-convex with respect to $\Omega^*$.  Since $-y_{n+1}$ is bounded away from zero and infinity on $\Omega^*$, we may apply the main result of \cite{GK14} to see that $u\in C^{1, \alpha}_{loc}(\Omega)$.

Now suppose $\overline{\Omega^*}\cap \partial \S^n_-\neq \emptyset$. In this case, (using Remark \ref{rem: euclidean density}) we may view a Brenier solution $u$ as a convex solution to an optimal transport problem with absolutely continuous target measure in $\R^n$, by Proposition \ref{prop: c-convex is geodesically convex} the support of this target measure is convex. Since $\frac{1}{(1+\norm{p}^2)^{\frac{n+2}{2}}}$ is bounded away from zero and infinity on any compact set, and $\norm{\partial \Omega}=0$, as $\Omega$ is uniformly convex, we may apply \cite[Theorem 2.1]{CorderoFigalli19} under condition (2-d) there to finish the proof.

\end{proof}

\section{Proof of Theorem \ref{theorem:critical case boundary behavior}}\label{section:Urbas revisited}

Finally, we can use an argument based on that of \cite[Theorem 5.1]{FKM13} to show Theorem \ref{theorem:critical case boundary behavior}. The argument uses monotonicity properties of the transport map together with the Jacobian equation to show that interior (boundary) points of $\Omega$ must map to interior (boundary) points of $\Omega^*$. Here we will consider the problem with target domain in $\R^n$, as in Remark \ref{rem: euclidean density}. Also, in this section we will use $B^n$ and $B^{n-1}$ to denote open balls in $\R^n$ and $\R^{n-1}$ respectively.

\begin{remark}\label{rem: Omega boundary}
  We make a quick observation on the boundary of a convex, compact domain. Since $\Omega$ is convex, its boundary is locally Lipschitz. Then by a compactness argument, we see there exist constants $\rho\in (0, 1)$, $L>0$, and $C_1>1$ depending only on $\partial \Omega$ with the following property: for each point $z\in \partial \Omega$, after a rotation and translation of coordinates, there is an $L$-Lipschitz function $\phi: B^{n-1}_{\rho}(0)\to (-C_1\rho, C_1\rho)$ such that
\begin{align*}
    \mathcal{C}_\rho\cap \Omega&=\{(y', y_n)\in \mathcal{C}_\rho\mid y_n>\phi(y')\},\\
    z&\in \mathcal{C}_{\rho/2},
\end{align*}
where
\begin{align*}
    \mathcal{C}_\rho:&= B^{n-1}_{\rho}(0)\times (-2C_1\rho, 2C_1\rho).
\end{align*}
\end{remark}

\begin{definition}\label{def: legendre transform}
  If $u:\Omega\to \R\cup\{+\infty\}$ is not identically $+\infty$, its \emph{Legendre transform} $u^*: \R^n\to\R\cup\{+\infty\}$ is defined by
  \begin{align*}
      u^*(p):=\sup_{x\in \Omega}(\inner{x}{p}-u(x)).
  \end{align*}
\end{definition}
Let $p_0\in \R^n$ and suppose $x_0\in \subdiff{u^*}{p_0}$, or equivalently $p_0\in \subdiff{u}{x_0}$ (see \cite[Theorem 23.5]{Roc70}). Let us also write $d_0:=d(x_0, \partial \Omega)$ for brevity, and suppose $v_0$ is a unit vector such that the point 
\begin{align}\label{eqn:x boundary}
  x_\partial:=x_0+d_0v_0,    
\end{align}
is a point in $\partial \Omega$ which achieves the closest distance $d_0$ to $x_0$.

Now for a fixed $\theta\in (0, 1)$, we define the set 
\begin{align*}
    E^*_{\theta}:=\{p\in \R^n\mid \norm{\frac{p-p_0}{\norm{p-p_0}}-v_0}\leq \theta,\ \norm{p-p_0}\leq 1\}.
\end{align*}
We will actually make the choice
\begin{align}
    \theta&=\sqrt{\frac{d_0}{6R_0}}<\frac{1}{\sqrt{6}},\label{eqn: theta condition}
\end{align}
recall that $R_0>0$ is an upper bound on the radii of all enclosing balls for $\Omega$ touching at a point on $\partial \Omega$.

If $p\in E^*_{\theta}$ and $x\in \subdiff{u}{p}$, by monotonicity of the subdifferential of convex functions (see \cite[Theorem 24.9]{Roc70}), we calculate
\begin{align*}
    \inner{x-x_0}{-v_0}&=\inner{x-x_0}{\frac{p-p_0}{\norm{p-p_0}}-v_0}-\inner{x-x_0}{\frac{p-p_0}{\norm{p-p_0}}}\\
    &\leq \inner{x-x_0}{\frac{p-p_0}{\norm{p-p_0}}-v_0}\\
    &\leq \norm{x-x_0}\norm{\frac{p-p_0}{\norm{p-p_0}}-v_0}
    \leq \theta \norm{x-x_0}.
\end{align*}
Thus if we let 
\begin{align*}
    E_{\theta}:=\{x\in \Omega\mid \inner{x-x_0}{-v_0}\leq \theta \norm{x-x_0}\},
\end{align*}
we have $\subdiff{u^*}{E^*_{\theta}}\subset E_{\theta}$ (see \cite[Fig. 1]{FKM13}).

Now we show $E_{\theta}$ is sufficiently close to the point $x_\partial$ from \eqref{eqn:x boundary}.

\begin{lemma}\label{lemma:Cone inclusion}

We have the inclusion (see Figure \ref{fig:Etheta})
\begin{align}\label{eqn: E theta inclusion}
    E_{\theta}\subset B^n_{\sqrt{(1+4R_0)d_0}}(x_\partial)\subset B^n_{\rho/4}(x_\partial).
\end{align}

\end{lemma}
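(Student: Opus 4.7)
The plan is to extract two quantitative ingredients and combine them into a quadratic bound on a single scalar variable. First, I would apply the enclosing ball condition at $x_\partial$ with center in the inward direction $-v_0$: since $\Omega \subset B^n_{R_0}(x_\partial - R_0 v_0)$, expanding the inequality $\|x - (x_\partial - R_0 v_0)\|^2 \leq R_0^2$ for any $x \in \Omega$ yields
\begin{align*}
\|x - x_\partial\|^2 \leq 2 R_0\, a, \qquad a := \langle x_\partial - x, v_0 \rangle \geq 0.
\end{align*}
Second, I would rewrite the membership condition for $E_\theta$ in the same variable $a$. Using the decomposition $x - x_0 = (x - x_\partial) + d_0 v_0$ one gets $\langle x - x_0, -v_0 \rangle = a - d_0$ and
\begin{align*}
\|x - x_0\|^2 = \|x - x_\partial\|^2 - 2 d_0 a + d_0^2 \leq 2 R_0 a + d_0^2,
\end{align*}
where the last inequality uses the enclosing ball bound and discards the nonpositive term $-2 d_0 a$.

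The case $a \leq d_0$ is immediate: the $E_\theta$ defining inequality is automatic and $\|x - x_\partial\|^2 \leq 2 R_0 d_0 \leq (1 + 4 R_0) d_0$. For $a > d_0$, squaring the $E_\theta$ condition and substituting the bound above yields the quadratic inequality
\begin{align*}
(a - d_0)^2 \leq \theta^2 (2 R_0 a + d_0^2).
\end{align*}
Substituting $\theta^2 = d_0 / (6 R_0)$ from \eqref{eqn: theta condition} collapses $d_0 + R_0 \theta^2$ to $7 d_0 / 6$, and the quadratic formula combined with $\theta^2 < 1/6$ produces $a \leq 2 d_0$ after a brief computation. Plugging back into the enclosing ball bound gives $\|x - x_\partial\|^2 \leq 4 R_0 d_0 \leq (1 + 4 R_0) d_0$, which is the first inclusion. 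The second inclusion is then just the rearrangement $\sqrt{(1+4R_0)d_0} \leq \rho/4$ of the hypothesis \eqref{eqn: d_0 condition}.

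The main obstacle is matching constants tightly. A naive application of the triangle inequality $\|x - x_0\| \leq \|x - x_\partial\| + d_0$ is too crude and produces a strictly larger multiplicative factor than $4 R_0$ in the final bound. What makes the argument close with the advertised constant $1 + 4 R_0$ is retaining the exact identity $\|x - x_0\|^2 = \|x - x_\partial\|^2 - 2 d_0 a + d_0^2$ and discarding only the favorable nonpositive cross term; the specific calibration $\theta^2 = d_0/(6 R_0)$ in \eqref{eqn: theta condition} is then tuned precisely so the resulting quadratic in $a$ delivers the clean bound $a \leq 2 d_0$.
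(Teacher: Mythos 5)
Your proof is correct and follows essentially the same route as the paper's: the enclosing-ball estimate at $x_\partial$ combined with the defining cone inequality of $E_\theta$ produces a quadratic inequality in the normal component $a=\langle x_\partial-x,v_0\rangle$ (which is exactly the paper's $x_n$ after its rotation of coordinates), solved with the same calibration $\theta^2=d_0/(6R_0)$ and the same case split at $a=d_0$ to reach $a\le 2d_0$ and hence the stated inclusion, with the second inclusion read off from \eqref{eqn: d_0 condition}. Your coordinate-free bookkeeping is in fact marginally tidier, since bounding $\norm{x-x_\partial}^2\le 2R_0a\le 4R_0d_0$ directly avoids the paper's separate estimates of $\norm{x'}^2$ and $x_n^2$ and its extra use of $d_0\le\tfrac14$ to absorb the $4d_0^2$ term.
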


\begin{proof}
 Rotate and translate coordinates on $\R^n$ so that $x_\partial$ is the origin and the vector $v_0$ is in the negative $e_n$ direction (thus $x_0=d_0e_n$). In particular, the plane $\R^{n-1}$ is tangent to $\partial \Omega$ at the origin. By the enclosing ball condition, there exists a ball $B^n$ of radius $R\leq R_0$ which contains $\Omega$ and such that $\partial B\cap \partial \Omega=\{0\}$. Then writing $x=(x', x_n)\in \R^{n-1}\times \R$, we find if $x\in E_{\theta}$, then
\begin{align}\label{eqn: x_n bound}
    x_n&\geq R-\sqrt{R^2-\norm{x'}^2}\geq \frac{\norm{x'}^2}{2R}
\end{align}
which follows from taking $t=|x'|/R$ the elementary inequality $1-\sqrt{1-t^2} \geq t^2/2$ for all $t>0$. Now suppose $x\in E_{\theta}$ with $x_n\geq d_0$. Then by the definition of $E_{\theta}$ and \eqref{eqn: x_n bound}, we find
\begin{align*}
x_n^2-2d_0x_n+d_0^2&=(x_n-d_0)^2\leq \theta^2\norm{x-d_0e_n}^2\\
&=\theta^2x_n^2+\theta^2\norm{x'}^2-2d_0\theta^2x_n+\theta^2d_0^2\\
&\leq \theta^2x_n^2+2R\theta^2x_n-2d_0\theta^2x_n+\theta^2d_0^2\\
\implies &(1-\theta^2)x_n^2-2(d_0(1-\theta^2)+R\theta^2)x_n+(1-\theta^2)d_0^2\leq 0. 
\end{align*}
Then since $1-\theta^2\geq 5/6$ and $R\theta^2\leq d_0/6$, both by \eqref{eqn: theta condition},
\begin{align*}
    d(x, \partial \Omega)&\leq x_n\leq \frac{2(d_0(1-\theta^2)+R\theta^2)+\sqrt{4(d_0(1-\theta^2)+R\theta^2)^2-4d_0^2(1-\theta^2)^2}}{2(1-\theta^2)}\\
    &\leq\frac{d_0(1-\theta^2)+\frac{d_0}{6}+\sqrt{R}\theta\sqrt{R\theta^2+2d_0-2d_0\theta^2}}{1-\theta^2}\\
    &\leq\frac{d_0(1-\theta^2)+\frac{d_0}{6}+\sqrt{\frac{d_0}{6}}\sqrt{\frac{d_0}{6}+2d_0}}{1-\theta^2}\\
    &\leq d_0+\frac{\frac{d_0}{6}+\frac{d_0\sqrt{13}}{6}}{\frac{5}{6}}\\
    &=2d_0.
\end{align*}
In the case where $x_n \leq d_0$, we simply note that $d(x,\partial \Omega) \leq x_n \leq d_0\leq 2d_0$. By \eqref{eqn: d_0 condition} this implies the inequality in \eqref{eqn: K decay condition} holds for all $x\in E_{\theta}$. At the same time recalling \eqref{eqn: x_n bound} and \eqref{eqn: d_0 condition} gives that 
\begin{align*}
    \norm{x'}^2\leq 4Rd_0,\text{ hence }\norm{x}^2\leq 4d_0^2+4Rd_0\leq (1+4R_0)d_0 ,
\end{align*}
and we also see
\begin{align*}
    E_{\theta}\subset B^n_{\sqrt{(1+4R_0)d_0}}(x_\partial)\subset B^n_{\rho/4}(x_\partial).
\end{align*}

\begin{figure}
\includegraphics[scale=0.5]{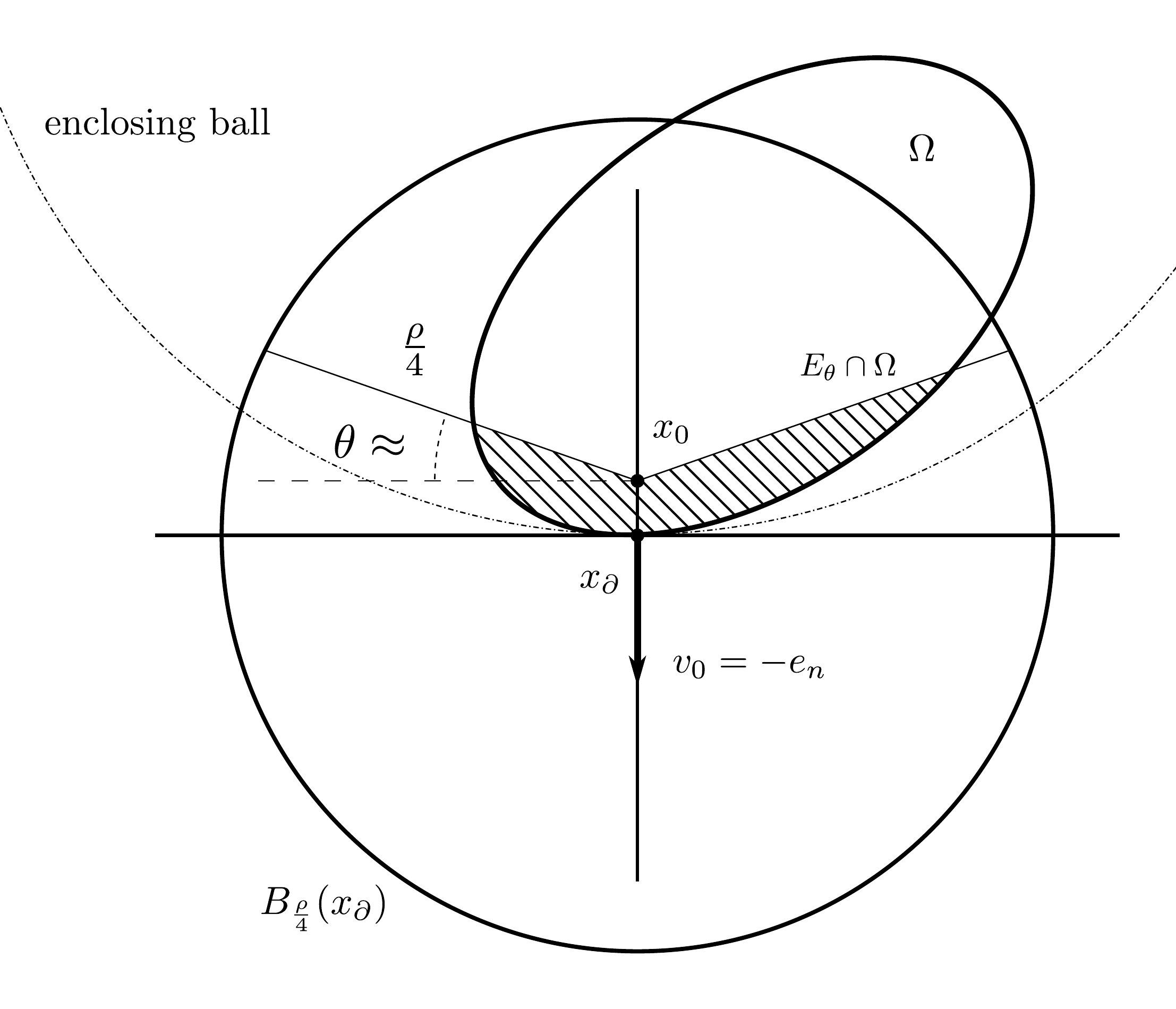}
\caption{The cone $E_\theta$ and its intersection with $\Omega$.}
\label{fig:Etheta}
\end{figure}

\end{proof}

Let us now make another change of coordinates: make the rotation and translation of coordinates as in Remark \ref{rem: Omega boundary}, and take the cylinder $\mathcal{C}_\rho$ and function $\phi$ corresponding to the point $x_\partial$ (hence $x_\partial$ may no longer be the origin). 

Let us define the cylinder
\begin{align*}
    \tilde{\mathcal{C}}:=B^{n-1}_{\sqrt{(1+4R_0)d_0}}(x'_\partial)\times (\inner{e_n}{x_\partial}-\sqrt{(1+4R_0)d_0}, \inner{e_n}{x_\partial}+\sqrt{(1+4R_0)d_0}),
\end{align*}
we will now make a detour to show an estimate on $\Hvol[n-1]{\partial \Omega_t\cap \tilde{\mathcal{C}}}$ in terms of $d_0$. 

\begin{lemma}\label{lemma:slice estimate}
  For each $t\geq 0$ define the set
  \begin{align*}
    \Omega_t = \{ x\in \Omega \mid d(x,\partial \Omega) \geq t \}.    
  \end{align*}  
  Then, we have the following estimate
  \begin{align}\label{eqn: St measure bound}
    \Hvol[n-1]{\Omega_t\cap \tilde{\mathcal{C}}}&\leq (1+4R_0)^{\frac{n-1}{2}}\omega_{n-1}(1+L)^{n-1}d_0^{\frac{n-1}{2}}.
  \end{align}
  Here, $L$ is the Lipschitz constant defined in Remark \ref{rem: Omega boundary}.
\end{lemma}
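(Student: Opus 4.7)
The plan is to transfer the local Lipschitz graph description of $\partial \Omega$ supplied by Remark \ref{rem: Omega boundary} to the inner parallel surface $\partial \Omega_t$ using convexity of $\Omega$, and then estimate the $(n-1)$-dimensional Hausdorff measure via the fact that an $M$-Lipschitz map between $(n-1)$-dimensional Euclidean sets increases $\Hvol[n-1]{\cdot}$ by at most a factor of $M^{n-1}$. (Note that I read the left-hand side of \eqref{eqn: St measure bound} as $\Hvol[n-1]{\partial \Omega_t \cap \tilde{\mathcal{C}}}$, in line with the paragraph preceding the lemma.)

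First I would pass to the rotated and translated chart at $x_\partial$ from Remark \ref{rem: Omega boundary}, writing $\Omega \cap \mathcal{C}_\rho = \{y_n > \phi(y')\}$ with $\phi: B^{n-1}_\rho(0) \to \R$ an $L$-Lipschitz function. The size condition \eqref{eqn: d_0 condition} gives $\sqrt{(1+4R_0)d_0} \leq \rho/4$, which together with $x_\partial \in \mathcal{C}_{\rho/2}$ and $C_1 > 1$ yields $\tilde{\mathcal{C}} \subset \mathcal{C}_\rho$; so the graph representation of $\partial \Omega$ is valid wherever we need it. Since $\Omega$ is convex, so is the inner parallel set $\Omega_t$, and the main claim is that $\partial \Omega_t \cap \tilde{\mathcal{C}}$ is contained in the graph of a single $L$-Lipschitz function $\psi_t$ over some $U \subset B^{n-1}_{\sqrt{(1+4R_0)d_0}}(x'_\partial)$. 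The geometric input comes from convexity: at each differentiable $p \in \partial \Omega_t$, the outward unit normal to $\Omega_t$ equals the outward unit normal to $\Omega$ at the nearest boundary point $q \in \partial \Omega$, and in our chart such normals have the form $(\nabla \phi(q'), -1)/\sqrt{1+|\nabla \phi(q')|^2}$ with horizontal part of norm at most $L/\sqrt{1+L^2} < 1$. Hence $\partial \Omega_t$ is nowhere vertical in the chart and is locally the graph of a function with $|\nabla \psi_t| \leq L$ a.e.

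Once this is in hand, the graph map $F(y') := (y', \psi_t(y'))$ is $\sqrt{1+L^2}$-Lipschitz, hence $(1+L)$-Lipschitz, so the standard Lipschitz mapping estimate yields
\begin{align*}
  \Hvol[n-1]{\partial \Omega_t \cap \tilde{\mathcal{C}}} &\leq \Hvol[n-1]{F(U)} \leq (1+L)^{n-1}\Hvol[n-1]{U} \\
  &\leq (1+L)^{n-1}\omega_{n-1}\bigl((1+4R_0)d_0\bigr)^{(n-1)/2},
\end{align*}
which matches \eqref{eqn: St measure bound}. The main obstacle will be justifying the Lipschitz graph description of $\partial \Omega_t$ at non-smooth points, as well as ensuring that the nearest-point assignment $p \mapsto q \in \partial \Omega$ lands inside the graph portion of $\partial \Omega$ in $\mathcal{C}_\rho$. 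A clean route for both issues is to observe that every supporting hyperplane to $\Omega_t$ arises as an inward parallel translate of a supporting hyperplane to $\Omega$, so its inclination in the chart is controlled by the global Lipschitz constant $L$ of $\phi$, precluding vertical normals throughout $\tilde{\mathcal{C}}$; alternatively one can use the explicit formula $\psi_t(y') = \sup_{|w'| \leq t}[\phi(y'+w') + \sqrt{t^2 - |w'|^2}]$ and verify the Lipschitz bound directly from that of $\phi$.
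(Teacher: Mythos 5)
Your reading of the left-hand side of \eqref{eqn: St measure bound} as the boundary piece $\partial \Omega_t\cap\tilde{\mathcal{C}}$ is the intended one (it is what the coarea step in the proof of Theorem \ref{theorem:critical case boundary behavior} uses), and your overall strategy coincides with the paper's: exhibit this set as part of the graph of an $L$-Lipschitz function over $B^{n-1}_{\sqrt{(1+4R_0)d_0}}(x'_\partial)$ and then apply the Lipschitz-image bound for $\mathcal{H}^{n-1}$. The difference lies in how the graph property is established. Your second route, via $\psi_t(y')=\sup_{\norm{w'}\le t}\bigl[\phi(y'+w')+\sqrt{t^2-\norm{w'}^2}\bigr]$, is correct and arguably cleaner than the paper's argument (which proves an upper cone inclusion by sliding balls under the graph of $\phi$, and a lower cone exclusion using convexity of $\Omega_t$, a supporting hyperplane and a reflection): a supremum of $L$-Lipschitz functions is $L$-Lipschitz, and convexity of $\Omega_t$ is not even needed. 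What must be added is the restriction $0<t<2d_0$ (the only range used in the application, and the range the paper itself treats), together with the verification, via \eqref{eqn: d_0 condition} and $\tilde{\mathcal{C}}\subset\mathcal{C}_\rho$, that $B^n_t(p)\subset\mathcal{C}_\rho$ for $p\in\tilde{\mathcal{C}}$, so that $d(p,\partial\Omega)\ge t$ is indeed equivalent to $p_n\ge\psi_t(p')$; this plays exactly the role of the paper's estimate $\norm{w'_0+w'_t-w'_1}\le\rho$.

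By contrast, the ``clean route'' through supporting hyperplanes is not sound as stated. For the inner parallel body one only has the inequality $h_{\Omega_t}(\nu)\le h_\Omega(\nu)-t$ between support functions, and it can be strict (take $\Omega$ a square and $\nu$ a diagonal direction), so supporting hyperplanes of $\Omega_t$ are not in general inward translates by $t$ of supporting hyperplanes of $\Omega$; and even granting the weaker fact that every normal direction of $\Omega_t$ is a normal direction of $\Omega$ somewhere, the contact point may lie far outside $\mathcal{C}_\rho$, where the constant $L$ of Remark \ref{rem: Omega boundary} refers to a differently rotated chart and gives no slope control in the coordinates at $x_\partial$. The same locality issue affects your first argument via normals at nearest points: it only works after checking (using $t<2d_0$ and \eqref{eqn: d_0 condition}) that the nearest boundary point lies on the graph of $\phi$, and it must also handle non-differentiable points of $\partial\Omega_t$ --- which is precisely what the explicit formula for $\psi_t$ (or the paper's two-cone argument) accomplishes. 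So: keep the $\psi_t$ route, spell out the cylinder containment and the range of $t$, and drop the hyperplane-translation shortcut.
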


\begin{proof}
First, note that for any $t\geq 0$, the set $\Omega_t:=\{x\in \Omega\mid d(x, \partial\Omega)\geq t\}$ is convex. Indeed, suppose $x_1, x_2\in \Omega_t$, then we must have $B_{t}(x_1)\cup B_{t}(x_2)\subset \Omega$. Then, by convexity of $\Omega$, for any $\lambda\in [0, 1]$
\begin{align*}
    B_{t}((1-\lambda)x_1+\lambda x_2)&=\bigcup_{(y_1, y_2)\in B_t(x_1)\times B_t(x_2)}\{(1-\lambda)y_1+\lambda y_2\}\subset \Omega,
\end{align*}
hence $d((1-\lambda)x_1+\lambda x_2, \partial \Omega)\geq t$ as well and the claim is proved; in particular $\partial \Omega_t$ is Lipschitz. 

 If $0<t<2d_0$, we claim that $ \Omega_t\cap \tilde{\mathcal{C}}$ is given by the epigraph of an $L$-Lipschitz function over $B^{n-1}_{\sqrt{(1+4R_0)d_0}}(x'_\partial)$; note that since $x_\partial\in \mathcal{C}_{\rho/2}$, by \eqref{eqn: d_0 condition} we have $\tilde{\mathcal{C}}\subset \mathcal{C}_\rho$. To this end, fix a point $(w'_0, h_0)\in \partial \Omega_t\cap \tilde{\mathcal{C}}$, and suppose $(w'_1, h_1)$, $(w'_t, h_t)\in \tilde{\mathcal{C}}$ satisfy (see Figure \ref{fig:Omega_t})
\begin{align}
    \norm{(w'_1, h_1)-(w'_t, h_t)}&\leq t,\notag\\
    h_0+L\norm{w'_1-w'_0}&<h_1\label{eqn: estimate 1}.
\end{align}
\begin{figure}
    \centering
    \includegraphics[scale=0.7]{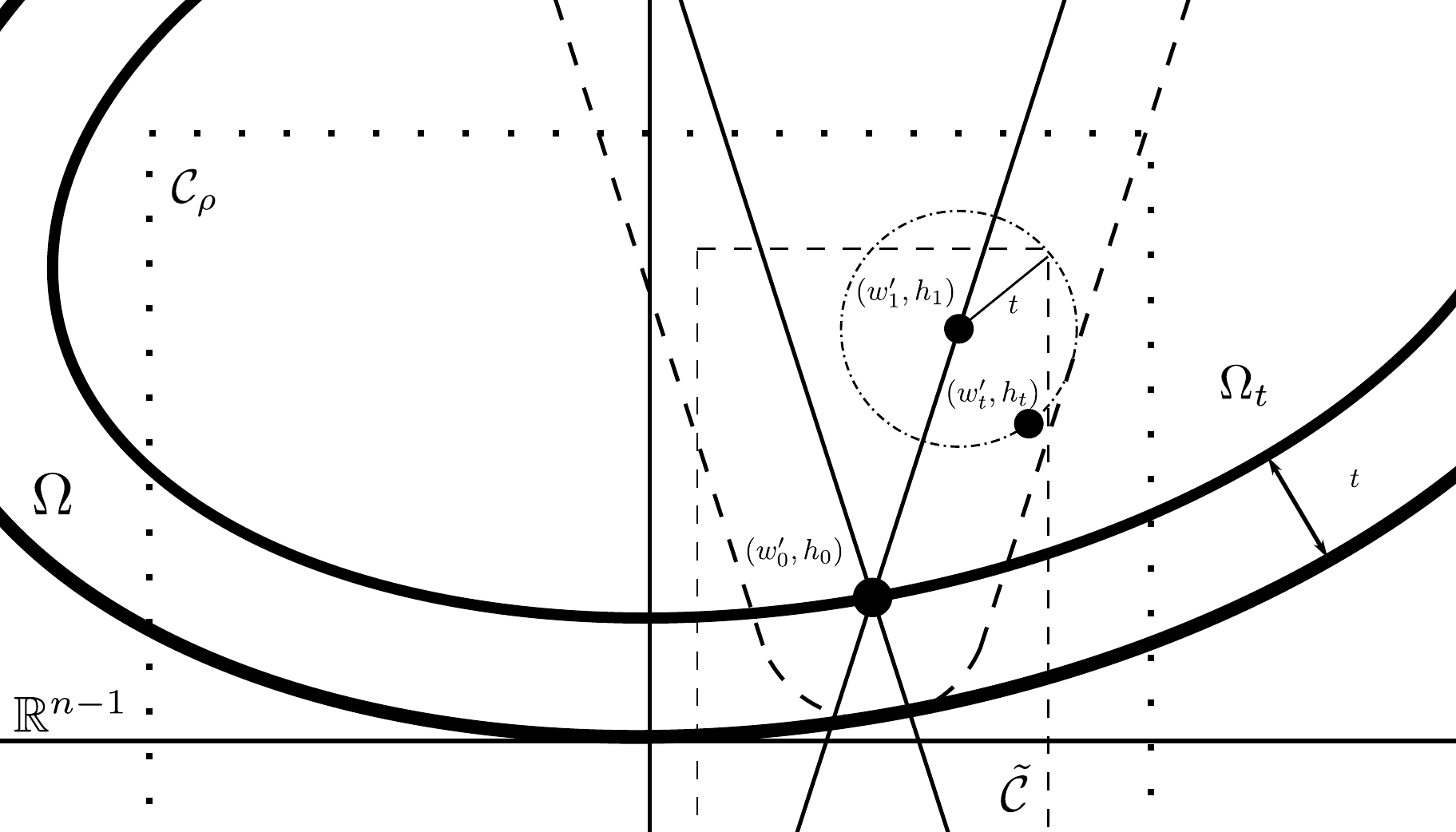}
    \caption{The sets $\Omega_t$ are locally Lipschitz for small $t$.}
    \label{fig:Omega_t}
\end{figure}

Then

\begin{align}\label{eqn: estimate 2}
t^2-\norm{w'_t-w'_1}^2\geq (h_1-h_t)^2,
\end{align}
while by \eqref{eqn: d_0 condition} and \eqref{eqn: E theta inclusion} we see $B^n_t((w'_0, h_0))\subset \Omega\cap \mathcal{C}_\rho$. We also calculate
\begin{align*}
   \norm{w'_0+w'_t-w'_1}&\leq \norm{x'_\partial}+\norm{w'_0-x'_\partial}+\norm{w'_t-w'_1}
   \leq \frac{\rho}{2}+\sqrt{(1+4R_0)d_0}+2d_0\leq \rho,
\end{align*}
hence (using \eqref{eqn: estimate 1} and \eqref{eqn: estimate 2}),
\begin{align}
    \phi(w'_0+w'_t-w'_1)+L\norm{w'_t-(w'_0+w'_t-w'_1)}&\leq h_0-\sqrt{t^2-\norm{(w'_0+w'_t-w'_1)-w'_0}^2}+L\norm{w'_1-w'_0}\notag\\
    &\leq h_1-\norm{h_1-h_t}<h_t.\label{eqn: ht bigger than cone}
\end{align}
Since $\phi$ is $L$-Lipschitz, \eqref{eqn: ht bigger than cone} implies that $(w'_t, h_t)\in\epi(\phi)\setminus \partial\Omega$. This shows that for any $(w'_0, h_0)\in \partial \Omega_t\cap \tilde{\mathcal{C}}$,
\begin{align}\label{eqn: St upper cone}
    h_0+L\norm{w'-w'_0}<h\implies (w', h)\in \Omega_t.
\end{align}
Next since $\Omega_t$ is convex, there is a supporting hyperplane to $\Omega_t$ at $x_\partial$ with an outer normal vector $N\in\R^n$, that is,
\begin{align}\label{eqn: St halfspace}
    \inner{N}{(w', h)-(w'_0, h_0)}\leq 0,\quad\forall (w', h)\in \Omega_t.
\end{align}
Suppose $(w', h)$ satisfies $h<h_0-L\norm{w'-w'_0}$. Then $2h_0-h>h_0+L\norm{w'-w'_0}=h_0+L\norm{(2w'_0-w')-w'_0)}$, hence by \eqref{eqn: St upper cone} we must have $(2w'_0-w', 2h_0-h)\in \Omega_t\setminus \partial \Omega_t$. Then using \eqref{eqn: St halfspace},
\begin{align*}
    \inner{N}{(w', h)-(w'_0, h_0)}&=-\inner{N}{(2w'_0-w', 2h_0-h)-(w'_0, h_0)}\geq 0,
\end{align*}
hence we must have $(w', h)\not\in \Omega_t$. Thus we have
\begin{align*}
    h<h_0-L\norm{w'-w'_0}\implies (w', h)\not\in \Omega_t,
\end{align*}
and by combining this with \eqref{eqn: St upper cone}, we obtain the existence of a convex, $L$-Lipschitz function $\phi_t: B^{n-1}_{\sqrt{(1+4R_0)d_0}}(x'_\partial)\to \R$ such that
\begin{align*}
    \Omega_t\cap \tilde{\mathcal{C}}=\{(x', x_n)\in \tilde{\mathcal{C}}\mid x_n\geq \phi_t(x')\}.
\end{align*}
We can then define the map $\Phi_t: B^{n-1}_{\sqrt{(1+4R_0)d_0}}(x'_\partial)\to \Omega_t\cap \tilde{\mathcal{C}}$ defined by $\Phi_t(x')=(x', \phi_t(x'))$, this is  clearly a $(1+L)$-Lipschitz map that is onto $\Omega_t\cap \tilde{\mathcal{C}}$. Thus we obtain the desired bound
\begin{align*}
    \Hvol[n-1]{\Omega_t\cap \tilde{\mathcal{C}}}&\leq\Hvol[n-1]{\Phi_t\left(B^{n-1}_{\sqrt{(1+4R_0)d_0}}(x'_\partial)\right)}\notag\\
    &\leq (1+4R_0)^{\frac{n-1}{2}}\omega_{n-1}(1+L)^{n-1}d_0^{\frac{n-1}{2}}.
\end{align*}

\end{proof}

\begin{proof}[Proof of Theorem \ref{theorem:critical case boundary behavior}]
By Remark \ref{rem: euclidean density} (recall \cite[Section 2]{Urbas84}), we see that $u$ satisfies the equation
\begin{align*}
    \int_{\partial u(E_\theta)}\frac{dp}{(1+\norm{p}^2)^{\frac{n+2}{2}}}=\int_{E_\theta}K(x)dx.
\end{align*}
Since, at the beginning of this section we showed the inclusion $E_\theta^* \subset \partial u(E_\theta)$, we then have 
\begin{align*}
  \int_{E^*_{\theta}}\frac{dp}{(1+\norm{p}^2)^{\frac{n+2}{2}}} \leq \int_{E_{\theta}}K(x)\;dx.
\end{align*}
The proof of the estimate will amount to estimating each of these two integrals. First, we have  
\begin{align}
    \int_{E^*_{\theta}}\frac{dp}{(1+\norm{p}^2)^{\frac{n+2}{2}}}
    &\geq \frac{\norm{E^*_{\theta}}}{(1+(\norm{p_0}+1)^2)^{\frac{n+2}{2}}} \geq \frac{\omega_{n-1}\theta^{n-1}}{n2^{n-1}(1+(\norm{p_0}+1)^2)^{\frac{n+2}{2}}}, \notag
\end{align}
where we made use of the straightforward estimate  $\norm{E^*_{\theta}} \geq \frac{\omega_{n-1} \theta^{n-1}}{n2^{n-1}}$. Then, in light of the definition of $\theta$, 
\begin{align*}
  \int_{E^*_{\theta}}\frac{dp}{(1+\norm{p}^2)^{\frac{n+2}{2}}} \geq\frac{\omega_{n-1}d_0^{\frac{n-1}{2}}}{n2^{n-1}6^{\frac{n-1}{2}}R_0^{\frac{n-1}{2}}(\norm{p_0}+2)^{n+2}}.\notag    
\end{align*}
Now we estimate the second integral. From the assumption on $K$,
\begin{align*}
  \int_{E_{\theta}}K & \leq C_0\int_{E_{\theta}}d(x,\partial \Omega)^{-\delta} dx.
\end{align*}
To estimate this integral we apply the co-area formula and Lemma \ref{lemma:slice estimate}, which yields
\begin{align*}
    \int_{E_{\theta}}d(x, \partial \Omega)^{-\delta} dx &\leq \int_0^{2d_0}\int_{\{x\in \Omega\mid d(x, \partial \Omega)=t,\ \norm{x'}\leq \sqrt{(1+4R_0)d_0}\}}t^{-\delta}d\Hvol[n-1]{x}dt\\
    &\leq (1+4R_0)^{\frac{n-1}{2}}\omega_{n-1}(1+L)^{n-1}d_0^{\frac{n-1}{2}}\int_0^{2d_0}t^{-\delta}dt\\
    &\leq \frac{2(1+4R_0)^{\frac{n-1}{2}}\omega_{n-1}(1+L)^{n-1}d_0^{\frac{n-1}{2}+1-\delta}}{1-\delta}.
\end{align*}
Combining these two estimates we have
\begin{align*}
\frac{\omega_{n-1} d_0^{\frac{n-1}{2}}}{n2^{n-1}6^{\frac{n-1}{2}}R_0^{\frac{n-1}{2}}(\norm{p_0}+2)^{n+2}}&\leq \frac{2(1+4R_0)^{\frac{n-1}{2}}\omega_{n-1}C_0(1+L)^{n-1}d_0^{\frac{n-1}{2}+1-\delta}}{1-\delta}.
\end{align*}

After rearranging, this inequality amounts to
\begin{align*}
  |p_0| \geq \frac{\Lambda }{d_0^{1-\delta}}-2, \textnormal{ where } \Lambda = \left ( \frac{(1-\delta)}{n2^nC_0(1+L)^{n-1}(6R_0+24R_0^2)^{\frac{n-1}{2}}} \right )^{\frac{1}{n+2}}.
\end{align*}
This is \eqref{eqn: p_0 estimate} and the theorem is proved.

\end{proof}
\bibliography{PJEreg.bib}
\bibliographystyle{alpha}
\end{document}